\documentclass[12pt, reqno]{amsart}
\usepackage{amsmath, amsthm, amscd, amsfonts, amssymb, graphicx, color,float,pgf,tikz}
\usepackage{amssymb,fontenc}
\usepackage{latexsym,wasysym,mathrsfs}
\usepackage{hyperref}
\usetikzlibrary{arrows}
\usepackage[square,numbers,sort&compress]{natbib}
\textheight 22.5truecm 
\textwidth 14.5truecm
\setlength{\oddsidemargin}{0.35in}\setlength{\evensidemargin}{0.35in}
\setlength{\topmargin}{-.5cm}
\newtheorem{theorem}{Theorem}[section]
\newtheorem{lemma}[theorem]{Lemma}

\newtheorem{corollary}[theorem]{Corollary}

\theoremstyle{definition}
\newtheorem{definition}[theorem]{Definition}

\newtheorem{remark}[theorem]{Remark}
\numberwithin{equation}{section}

\newcommand{\be}{\begin{equation}}
\newcommand{\ee}{\end{equation}}

\numberwithin{equation}{section}
\usepackage{etoolbox}
\makeatletter
\patchcmd{\@settitle}{\uppercasenonmath\@title}{}{}{}
\patchcmd{\@setauthors}{\MakeUppercase}{}{}{}
\makeatother
\allowdisplaybreaks
\begin{document}
\setcounter{page}{1}
\title[ Perturbation and Stability of  Continuous Operator Frames in Hilbert $C^{\ast}$-Modules]{ Perturbation and Stability of  Continuous Operator Frames in Hilbert $C^{\ast}$-Modules}
\author[ Abdeslam Touri$^{*1}$, Hatim Labrigui$^{1}$ Mohamed Rossafi$^{2}$ and Samir Kabbaj$^{1}$]{Abdeslam Touri$^{*1}$, Hatim Labrigui$^{1}$ Mohamed Rossafi$^{2}$ and Samir Kabbaj$^{1}$}
\address{$^{1}$Department of Mathematics, Ibn Tofail University, B.P. 133, Kenitra, Morocco}
\email{\textcolor[rgb]{0.00,0.00,0.84}{  hlabrigui75@gmail;  touri.abdo68@gmail.com;samkabbaj@yahoo.fr}}
\address{$^{2}$LASMA Laboratory Department of Mathematics, Faculty of Sciences Dhar El Mahraz, University Sidi Mohamed Ben Abdellah, Fes, Morocco}
\email{\textcolor[rgb]{0.00,0.00,0.84}{rossafimohamed@gmail.com; mohamed.rossafi@usmba.ac.ma}}
\subjclass[2010]{41A58, 42C15}
\keywords{Continuous g-Frames, Controlled continuous g-frames, $C^{\ast}$-algebra, Hilbert $\mathcal{A}$-modules.\\
\indent
\\
\indent $^{*}$ Corresponding author}
\maketitle
\begin{abstract}
Frame Theory has a great revolution in recent years. This Theory have been extended from Hilbert spaces to Hilbert  $C^{\ast}$-modules. In this paper we consider the stability of continuous operator frame and continuous $K$-operator frames in Hilbert $C^{\ast}$-Modules under perturbation and we establish some properties.
\end{abstract}
\section{Introduction and preliminaries}
The concept of frames in Hilbert spaces is a new theory which was introduced by Duffin and Schaeffer \cite{Duf} in 1952 to study some deep problems in nonharmonic Fourier series. This theory was reintroduced and developed by Daubechies, Grossman and Meyer \cite{13}.

In 1993, S.T.Ali, J.P.Antoine and J.P.Gazeau \cite{STAJP} introduced the concept of continuous frames in Hilbert spaces. Gabardo and Han in \cite{14} called these kinds frames, frames associated with measurable spaces.

In 2000, Frank and Larson \cite{Frank} introduced the notion of frames in Hilbert $C^{\ast}$-modules as a generalization of frames in Hilbert spaces. The theory of continuous frames has been generalized in Hilbert $C^{\ast}$-modules. For more details, see \cite{mjpaa, mjpaaa, moi, moi1, ARAN, MR1, MR2, r1, r11, r2, r3, r4, r5, r6, r7, r10, r8, r9, LEC, R3}.\\
The aim of this paper is to extend results of M. Rossafi and A. Akhlidj \cite{r8}, given for Hilbert $C^{\ast}$-module in discret case.


In the following we briefly recall the definitions and basic properties of $C^{\ast}$-algebra and Hilbert $\mathcal{A}$-modules. Our references for $C^{\ast}$-algebras are \cite{{Dav},{Con}}. For a $C^{\ast}$-algebra $\mathcal{A}$, if $a\in\mathcal{A}$ is positive we write $a\geq 0$ and $\mathcal{A}^{+}$ denotes the set of positive elements of $\mathcal{A}$.
\begin{definition}\cite{Con}.	
	Let $ \mathcal{A} $ be a unital $C^{\ast}$-algebra and $\mathcal{H}$ be a left $ \mathcal{A} $-module, such that the linear structures of $\mathcal{A}$ and $ \mathcal{H} $ are compatible. $\mathcal{H}$ is a pre-Hilbert $\mathcal{A}$-module if $\mathcal{H}$ is equipped with an $\mathcal{A}$-valued inner product $\langle.,.\rangle_{\mathcal{A}} :\mathcal{H}\times\mathcal{H}\rightarrow\mathcal{A}$, such that is sesquilinear, positive definite and respects the module action. In the other words,
	\begin{itemize}
		\item [(i)] $ \langle x,x\rangle_{\mathcal{A}}\geq0 $, for all $ x\in\mathcal{H} $, and $ \langle x,x\rangle_{\mathcal{A}}=0$ if and only if $x=0$.
		\item [(ii)] $\langle ax+y,z\rangle_{\mathcal{A}}=a\langle x,z\rangle_{\mathcal{A}}+\langle y,z\rangle_{\mathcal{A}},$ for all $a\in\mathcal{A}$ and $x,y,z\in\mathcal{H}$.
		\item[(iii)] $ \langle x,y\rangle_{\mathcal{A}}=\langle y,x\rangle_{\mathcal{A}}^{\ast} $, for all $x,y\in\mathcal{H}$.
	\end{itemize}	 
	For $x\in\mathcal{H}, $ we define $||x||=||\langle x,x\rangle_{\mathcal{A}}||^{\frac{1}{2}}$. If $\mathcal{H}$ is complete with $||.||$, it is called a Hilbert $\mathcal{A}$-module or a Hilbert $C^{\ast}$-module over $\mathcal{A}$.\\
	 For every $a$ in $C^{\ast}$-algebra $\mathcal{A}$, we have $|a|=(a^{\ast}a)^{\frac{1}{2}}$ and the $\mathcal{A}$-valued norm on $\mathcal{H}$ is defined by $|x|=\langle x, x\rangle_{\mathcal{A}}^{\frac{1}{2}}$, for all $x\in\mathcal{H}$.
	
	Let $\mathcal{H}$ and $\mathcal{K}$ be two Hilbert $\mathcal{A}$-modules, a map $T:\mathcal{H}\rightarrow\mathcal{K}$ is said to be adjointable if there exists a map $T^{\ast}:\mathcal{K}\rightarrow\mathcal{H}$ such that $\langle Tx,y\rangle_{\mathcal{A}}=\langle x,T^{\ast}y\rangle_{\mathcal{A}}$ for all $x\in\mathcal{H}$ and $y\in\mathcal{K}$.
	
We reserve the notation $End_{\mathcal{A}}^{\ast}(\mathcal{H},\mathcal{K})$ for the set of all adjointable operators from $\mathcal{H}$ to $\mathcal{K}$ and $End_{\mathcal{A}}^{\ast}(\mathcal{H},\mathcal{H})$ is abbreviated to $End_{\mathcal{A}}^{\ast}(\mathcal{H})$.

\end{definition}

The following lemmas will be used to prove our mains results.
\begin{lemma} \label{1} \cite{Pas}.
	Let $\mathcal{H}$ be a Hilbert $\mathcal{A}$-module. If $T\in End_{\mathcal{A}}^{\ast}(\mathcal{H})$, then $$\langle Tx,Tx\rangle_{\mathcal{A}}\leq\|T\|^{2}\langle x,x\rangle_{\mathcal{A}}, \qquad x\in\mathcal{H}.$$
\end{lemma}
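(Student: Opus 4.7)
The plan is to reduce the stated inequality to the standard $C^{\ast}$-algebra fact that every positive element $a$ of a unital $C^{\ast}$-algebra satisfies $a\leq\|a\|\cdot\id$. Since $End_{\mathcal{A}}^{\ast}(\mathcal{H})$ is itself a $C^{\ast}$-algebra under the operator norm, this reduction should go through with almost no additional input beyond the definition of the adjoint.

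First I would verify that $T^{\ast}T$ is a positive element of $End_{\mathcal{A}}^{\ast}(\mathcal{H})$ whose norm equals $\|T\|^{2}$. Positivity is immediate from the identity $\langle T^{\ast}Tx,x\rangle_{\mathcal{A}}=\langle Tx,Tx\rangle_{\mathcal{A}}\geq 0$ for all $x\in\mathcal{H}$, while the norm equality $\|T^{\ast}T\|=\|T\|^{2}$ is just the $C^{\ast}$-identity in the algebra of adjointable operators. Continuous functional calculus applied to $T^{\ast}T$ inside this $C^{\ast}$-algebra then yields the operator-order inequality $T^{\ast}T\leq\|T\|^{2}\cdot\id$, or equivalently, that $\|T\|^{2}\cdot\id-T^{\ast}T$ is a positive adjointable operator.

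Finally, I would translate the operator-order inequality into the desired $\mathcal{A}$-valued inequality by evaluating the associated quadratic form at an arbitrary $x\in\mathcal{H}$:
\[
0\leq\langle(\|T\|^{2}\cdot\id-T^{\ast}T)x,x\rangle_{\mathcal{A}}=\|T\|^{2}\langle x,x\rangle_{\mathcal{A}}-\langle Tx,Tx\rangle_{\mathcal{A}},
\]
which gives the conclusion after rearrangement. The main (and really the only) delicate point is justifying the equivalence between positivity of a self-adjoint element of the $C^{\ast}$-algebra $End_{\mathcal{A}}^{\ast}(\mathcal{H})$ and positivity of the $\mathcal{A}$-valued quadratic form $x\mapsto\langle Sx,x\rangle_{\mathcal{A}}$; this is standard in the Hilbert $C^{\ast}$-module literature (indeed, it is exactly the point at which one transfers between the ``operator'' and ``module'' viewpoints), so I would simply invoke it rather than reprove it.
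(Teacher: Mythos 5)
Your proof is correct. The paper does not prove this lemma at all --- it is quoted verbatim from Paschke \cite{Pas} --- and your argument is essentially the standard one found there and in Lance's book: $T^{\ast}T$ is positive with norm $\|T\|^{2}$ in the $C^{\ast}$-algebra $End_{\mathcal{A}}^{\ast}(\mathcal{H})$, so $\|T\|^{2}\cdot\id-T^{\ast}T\geq 0$, and writing this positive element as $R^{\ast}R$ with $R$ its self-adjoint square root (adjointable, since it is a norm limit of polynomials in $T^{\ast}T$) turns the operator inequality into the $\mathcal{A}$-valued quadratic-form inequality. The one step you defer to the literature --- that $C^{\ast}$-positivity of $S$ in $End_{\mathcal{A}}^{\ast}(\mathcal{H})$ implies $\langle Sx,x\rangle_{\mathcal{A}}\geq 0$ --- is exactly this one-line square-root observation, so there is no gap.
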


\begin{lemma} \label{sb} \cite{Ara}.
	Let $\mathcal{H}$ and $\mathcal{K}$ be two Hilbert $\mathcal{A}$-modules and $T\in End_{\mathcal{A}}^{\ast}(\mathcal{H},\mathcal{K})$. Then the following statements are equivalent:
	\begin{itemize}
		\item [(i)] $T$ is surjective.
		\item [(ii)] $T^{\ast}$ is bounded below with respect to norm, i.e., there is $m>0$ such that $\|T^{\ast}x\|\geq m\|x\|$, for all $x\in\mathcal{K}$.
		\item [(iii)] $T^{\ast}$ is bounded below with respect to the inner product, i.e., there is $m'>0$ such that $\langle T^{\ast}x,T^{\ast}x\rangle_{\mathcal{A}}\geq m'\langle x,x\rangle_{\mathcal{A}}$, for all $x\in\mathcal{K}$.
	\end{itemize}
\end{lemma}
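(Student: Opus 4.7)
The plan is to establish (iii)$\Rightarrow$(ii)$\Rightarrow$(i)$\Rightarrow$(iii), with the $C^{\ast}$-algebraic invertibility of the positive adjointable operator $TT^{\ast}$ in $\mathrm{End}_{\mathcal{A}}^{\ast}(\mathcal{K})$ as the recurring tool.

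The implication (iii)$\Rightarrow$(ii) is immediate from the elementary fact that for positive elements in a $C^{\ast}$-algebra, $0\leq a\leq b$ forces $\|a\|\leq\|b\|$: applied to $m'\langle x,x\rangle_{\mathcal{A}}\leq\langle T^{\ast}x,T^{\ast}x\rangle_{\mathcal{A}}$, this yields $\|T^{\ast}x\|\geq\sqrt{m'}\,\|x\|$, so (ii) holds with $m=\sqrt{m'}$.

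For (ii)$\Rightarrow$(i), I would combine the hypothesis $\|T^{\ast}x\|\geq m\|x\|$ with the Cauchy--Schwarz inequality $\|\langle TT^{\ast}x,x\rangle_{\mathcal{A}}\|\leq\|TT^{\ast}x\|\,\|x\|$ (noting that the left side equals $\|T^{\ast}x\|^{2}$) to deduce $\|TT^{\ast}x\|\geq m^{2}\|x\|$. Since $TT^{\ast}$ is positive in $\mathrm{End}_{\mathcal{A}}^{\ast}(\mathcal{K})$, this scalar lower bound forces $\sigma(TT^{\ast})\subset[c,\infty)$ for some $c>0$, so continuous functional calculus on the commutative $C^{\ast}$-subalgebra it generates produces an inverse of $TT^{\ast}$. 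In particular $TT^{\ast}$ is surjective, and since $\ran(TT^{\ast})\subset\ran(T)$, one obtains $\ran(T)=\mathcal{K}$.

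For (i)$\Rightarrow$(iii), the open mapping theorem applied to the surjection $T$ between Banach spaces yields a bounded right inverse $S$ with $TS=I_{\mathcal{K}}$; taking adjoints gives $S^{\ast}T^{\ast}=I_{\mathcal{K}}$, so $T^{\ast}$ is bounded below in norm by $1/\|S\|$. The argument of the previous step then shows that $TT^{\ast}$ is positive and invertible in $\mathrm{End}_{\mathcal{A}}^{\ast}(\mathcal{K})$, and functional calculus delivers $m'>0$ with $TT^{\ast}\geq m'I$. Hence $\langle T^{\ast}x,T^{\ast}x\rangle_{\mathcal{A}}=\langle TT^{\ast}x,x\rangle_{\mathcal{A}}\geq m'\langle x,x\rangle_{\mathcal{A}}$, as desired.

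The main obstacle is the passage between scalar norm bounds and $\mathcal{A}$-valued operator inequalities; unlike the Hilbert space case one cannot polarize directly, so the step $\|TT^{\ast}x\|\geq c\|x\|\Rightarrow TT^{\ast}\geq c'I$ must be routed through continuous functional calculus on the self-adjoint operator $TT^{\ast}$, using that its spectrum inside $\mathrm{End}_{\mathcal{A}}^{\ast}(\mathcal{K})$ stays bounded away from $0$. This is the technical crux that makes the statement genuinely a Hilbert $C^{\ast}$-module result rather than a formal transcription from Hilbert space theory.
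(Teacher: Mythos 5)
The paper does not actually prove this lemma: it is quoted from the cited reference [Ara] with no argument given, so there is no in-paper proof to compare against and I can only judge your proposal on its own terms. Your overall architecture --- the cycle (iii)$\Rightarrow$(ii)$\Rightarrow$(i)$\Rightarrow$(iii) organized around invertibility of the positive operator $TT^{\ast}$ in $End_{\mathcal{A}}^{\ast}(\mathcal{K})$ --- is the standard and correct route. Steps (iii)$\Rightarrow$(ii) and (ii)$\Rightarrow$(i) are sound; in the latter, the assertion that $\|TT^{\ast}x\|\geq m^{2}\|x\|$ forces $\sigma(TT^{\ast})\subset[c,\infty)$ is true for a positive element and deserves one explicit line (if $0\in\sigma(TT^{\ast})$, take $g_{\eps}\geq 0$ supported in $[0,\eps]$ with $g_{\eps}(0)=1$ and a unit vector $x$ with $\|g_{\eps}(TT^{\ast})x\|\geq\tfrac{1}{2}\|g_{\eps}(TT^{\ast})\|\geq\tfrac{1}{2}$; then $\|TT^{\ast}g_{\eps}(TT^{\ast})x\|\leq\eps$ contradicts the lower bound for small $\eps$).

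The genuine gap is in (i)$\Rightarrow$(iii). The open mapping theorem does \emph{not} produce a bounded \emph{linear} right inverse of a surjective bounded operator between Banach spaces; it only produces a bounded, in general nonlinear, selection. A bounded linear right inverse exists precisely when $\ker T$ is topologically complemented, which can fail: the quotient map $\ell^{\infty}\to\ell^{\infty}/c_{0}$ admits no bounded linear right inverse because $c_{0}$ is not complemented in $\ell^{\infty}$. What rescues the step here is exactly the Hilbert $C^{\ast}$-module structure, via the closed range theorem for adjointable operators (Lance, Theorem 3.2, reference [28] of this paper): since $T$ is adjointable with closed (indeed full) range, one gets $\mathcal{H}=\ker T\oplus\ran (T^{\ast})$ with $\ran(T^{\ast})$ closed and $\ker T^{\ast}=(\ran T)^{\perp}=\{0\}$, so $T^{\ast}$ is a bijection onto a closed submodule and hence bounded below --- which is (ii) --- after which your own argument gives invertibility of $TT^{\ast}$ and then (iii). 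As written, the sentence ``the open mapping theorem yields a bounded right inverse $S$ \dots taking adjoints gives $S^{\ast}T^{\ast}=I$'' is unjustified on two counts (existence of a linear $S$, and its adjointability); replace it by an appeal to Lance's closed range theorem and the proof closes up.
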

\begin{lemma} \label{33} \cite{33}.
	Let $(\Omega,\mu )$ be a measure space, $X$ and $Y$ are two Banach spaces, $\lambda : X\longrightarrow Y$ be a bounded linear operator and $f : \Omega\longrightarrow X$ measurable function; then, 
	\begin{equation*}
	\lambda (\int_{\Omega}fd\mu)=\int_{\Omega}(\lambda f)d\mu.
	\end{equation*}
\end{lemma}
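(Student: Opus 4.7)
The plan is to prove this identity by the standard three-step approximation scheme for Bochner-integral identities: verify it on simple functions, approximate a general Bochner integrable $f$ by simple functions, and pass to the limit using boundedness (hence continuity) of $\lambda$.

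First, I would verify the identity on simple integrable functions $f = \sum_{i=1}^{n} \chi_{A_i}\, x_i$ with $\mu(A_i) < \infty$ and $x_i \in X$. For such $f$, by definition of the Bochner integral, $\int_\Omega f\, d\mu = \sum_{i=1}^{n} \mu(A_i)\, x_i$, and linearity of $\lambda$ yields immediately
\begin{equation*}
\lambda\Bigl(\int_\Omega f\, d\mu\Bigr) = \sum_{i=1}^{n} \mu(A_i)\, \lambda(x_i) = \int_\Omega (\lambda f)\, d\mu.
\end{equation*}

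Second, for a general Bochner integrable $f$, I would invoke the standard density theorem: there exists a sequence $(f_n)$ of simple integrable functions with $f_n \to f$ $\mu$-a.e.\ and $\int_\Omega \|f_n - f\|\, d\mu \to 0$. Since $\lambda$ is bounded, the pointwise estimate $\|\lambda f_n - \lambda f\| \leq \|\lambda\|\, \|f_n - f\|$ gives $\int_\Omega \|\lambda f_n - \lambda f\|\, d\mu \to 0$ as well. The Bochner norm inequality $\|\int_\Omega g\, d\mu\| \leq \int_\Omega \|g\|\, d\mu$ then forces $\int_\Omega f_n\, d\mu \to \int_\Omega f\, d\mu$ in $X$ and $\int_\Omega \lambda f_n\, d\mu \to \int_\Omega \lambda f\, d\mu$ in $Y$. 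Combining with continuity of $\lambda$ and step one applied to each $f_n$,
\begin{equation*}
\lambda\Bigl(\int_\Omega f\, d\mu\Bigr) = \lim_{n} \lambda\Bigl(\int_\Omega f_n\, d\mu\Bigr) = \lim_{n} \int_\Omega \lambda f_n\, d\mu = \int_\Omega \lambda f\, d\mu.
\end{equation*}

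The main technical obstacle is to justify that $\lambda f$ is strongly (Bochner) measurable, so the right-hand side actually makes sense as a Bochner integral. This is where one must use the Pettis-type characterization: strong measurability of $f$ provides simple approximants $s_n \to f$ a.e., whereupon continuity of $\lambda$ yields that $\lambda s_n$ are simple $Y$-valued functions converging a.e.\ to $\lambda f$, establishing strong measurability of $\lambda f$. Bochner integrability then follows from $\|\lambda f\| \leq \|\lambda\|\, \|f\|$ combined with integrability of $\|f\|$. Once this measurability issue is handled, the three-step argument above finishes the proof cleanly.
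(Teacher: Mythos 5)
The paper offers no proof of this lemma --- it is quoted directly from Yosida's \emph{Functional Analysis} as a known fact, so there is nothing internal to compare against. Your three-step argument (verify on simple functions, approximate a Bochner integrable $f$ in the $L^{1}$ sense by simple functions, pass to the limit via boundedness of $\lambda$, together with the preliminary check that $\lambda f$ is strongly measurable and integrable since $\|\lambda f\|\leq\|\lambda\|\,\|f\|$) is precisely the standard textbook proof of this commutation property and is correct; the only cosmetic remark is that the lemma as stated says only ``measurable,'' whereas your proof correctly supplies the Bochner integrability hypothesis that the statement tacitly requires for both sides to be defined.
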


\section{Characterisation of continuous operator frame for $ End_{\mathcal{A}}^{\ast}(\mathcal{H})$ }
Let $X$ be a Banach space, $(\Omega,\mu)$ a measure space, and $f:\Omega\to X$ be a measurable function. Integral of Banach-valued function $f$ has been defined by Bochner and others. Most properties of this integral are similar to those of the integral of real-valued functions (see \cite{32, 33}). Since every $C^{\ast}$-algebra and Hilbert $C^{\ast}$-module are Banach spaces, we can use this integral and its properties.

Let $(\Omega,\mu)$ be a measure space, $U$ and $V$ be two Hilbert $C^{\ast}$-modules over a unital $C^{\ast}$-algebra and $\{V_{w}\}_{w\in\Omega}$  is a family of submodules of $V$.  $End_{\mathcal{A}}^{\ast}(U,V_{w})$ is the collection of all adjointable $\mathcal{A}$-linear maps from $U$ into $V_{w}$.

We define, following:
\begin{equation*}
	l^{2}(\Omega, \{V_{w}\}_{\omega \in \Omega})=\left\{x=\{x_{w}\}_{w\in\Omega}: x_{w}\in V_{w}, \left\|\int_{\Omega}|x_{w}|^{2}d\mu(w)\right\|<\infty\right\}.
\end{equation*}
For any $x=\{x_{w}\}_{w\in\Omega}$ and $y=\{y_{w}\}_{w\in\Omega}$, the $\mathcal{A}$-valued inner product is defined by $\langle x,y\rangle=\int_{\Omega}\langle x_{w},y_{w}\rangle_{\mathcal{A}} d\mu(w)$ and the norm is defined by $\|x\|=\|\langle x,x\rangle\|^{\frac{1}{2}}$. In this case the $l^{2}(\Omega,\{V_{w}\}_{\omega \in \Omega})$ is an Hilbert $C^{\ast}$-module (see \cite{28}).

\begin{definition}
	We call $\Lambda :=\{\Lambda_{w}\in End_{\mathcal{A}}^{\ast}(\mathcal{H}): w\in\Omega\}$ a continuous operator frame for $End_{\mathcal{A}}^{\ast}(\mathcal{H})$ if:\\
(a) for any $x\in \mathcal{H}$, the mapping $\tilde{x}:\Omega\rightarrow V_{w}$ defined by $\tilde{x}(w)=\Lambda_{w}x$ is measurable;
(b) there is a pair of constants $0<A, B$ such that, for any $x\in\mathcal{H}$,
		\begin{equation} \label{2.1}
			A\langle x,x\rangle_{\mathcal{A}} \leq\int_{\Omega}\langle\Lambda_{w}x,\Lambda_{w}x\rangle_{\mathcal{A}} d\mu(w)\leq B\langle x,x\rangle_{\mathcal{A}}, \quad  x\in \mathcal{H}.
		\end{equation}
The constants $A$ and $B$ are called continuous operator frame bounds.

If $A=B$ we call this continuous operator frame a continuous  tight operator frame, and if $A=B=1$ it is called a continuous Parseval operator frame.

If only the right-hand inequality of \eqref{2.1} is satisfied, we call $\Lambda =\{\Lambda_{w}\}_{w\in\Omega}$ the continuous Bessel operator frame for $End_{\mathcal{A}}^{\ast}(\mathcal{H})$  with Bessel bound $B$.\\

	The continuous frame operator $S$ of $\Lambda$ on $\mathcal{H}$ is defined by :
	\begin{equation*}
	Sx=\int_{\Omega}\Lambda^{\ast}_{\omega}\Lambda_{\omega}xd\mu (\omega), \quad  x\in \mathcal{H}.
	\end{equation*}
	The continuous frame operator S is a bounded, positive, selfadjoint, and invertible.
\end{definition}

\begin{theorem}
Let $\Lambda= \{\Lambda_{w}\in End_{\mathcal{A}}^{\ast}(\mathcal{H}): w\in\Omega\}$. $\Lambda$ is a continuous operator frame for $End_{\mathcal{A}}^{\ast}(\mathcal{H})$  if and only if there exist a constants $0<A, B$ such that for any $x\in \mathcal{H}$ :
\begin{equation*} \label{to1}
	A\|x\|^{2} \leq\| \int_{\Omega}\langle\Lambda_{w}x,\Lambda_{w}x\rangle_{\mathcal{A}} d\mu(w)\| \leq B\|x\|^{2}.
\end{equation*}
\end{theorem}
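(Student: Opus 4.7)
The plan is to derive the forward direction by applying the $C^{\ast}$-norm to the operator inequality \eqref{2.1}, and to obtain the converse by introducing the frame operator $S$ and extracting operator bounds from norm bounds with the help of Lemmas \ref{1} and \ref{sb}. The forward direction is immediate: because the $C^{\ast}$-norm is monotone on the positive cone of $\mathcal{A}$ and $\|\langle x,x\rangle_{\mathcal{A}}\|=\|x\|^{2}$, taking $\|\cdot\|$ of both sides of \eqref{2.1} yields the stated norm inequality with the same constants.

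For the converse, first observe that the upper norm bound forces the positive sesquilinear form $(x,y)\mapsto\int_{\Omega}\langle\Lambda_{w}x,\Lambda_{w}y\rangle_{\mathcal{A}}\,d\mu(w)$ to be bounded, and hence to be represented by a positive self-adjoint $S\in End_{\mathcal{A}}^{\ast}(\mathcal{H})$ satisfying $\langle Sx,x\rangle_{\mathcal{A}}=\int_{\Omega}\langle\Lambda_{w}x,\Lambda_{w}x\rangle_{\mathcal{A}}\,d\mu(w)=\langle S^{1/2}x,S^{1/2}x\rangle_{\mathcal{A}}$. The hypothesis then reads $A\|x\|^{2}\le\|S^{1/2}x\|^{2}\le B\|x\|^{2}$. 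The right-hand inequality yields $\|S^{1/2}\|^{2}\le B$, so Lemma \ref{1} applied to $T=S^{1/2}$ delivers $\langle S^{1/2}x,S^{1/2}x\rangle_{\mathcal{A}}\le B\langle x,x\rangle_{\mathcal{A}}$, i.e.\ the desired upper operator bound.

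The left-hand inequality says $S^{1/2}$ is bounded below in norm by $\sqrt{A}$; since $S^{1/2}$ is self-adjoint, applying Lemma \ref{sb} with $T=T^{\ast}=S^{1/2}$ produces a constant $A'>0$ with $\langle S^{1/2}x,S^{1/2}x\rangle_{\mathcal{A}}\ge A'\langle x,x\rangle_{\mathcal{A}}$, which is the lower operator bound in \eqref{2.1}. The substantive step, and the principal obstacle, is exactly this passage from a scalar lower bound to an operator lower bound: the analogous implication fails for arbitrary positive elements of a $C^{\ast}$-algebra, and succeeds here only because $S^{1/2}$ is self-adjoint, which makes conditions (ii) and (iii) of Lemma \ref{sb} equivalent; Lemma \ref{1} plays the symmetric role on the upper side. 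Note that the frame bounds produced by the converse need not coincide with the scalar bounds $A$ and $B$ of the hypothesis, which is consistent with the wording of the statement.
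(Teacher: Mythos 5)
Your proof is correct and takes essentially the same route the paper uses for the analogous continuous $K$-operator frame characterisation in Section 4 (the paper states the present theorem without proof): the forward direction follows from monotonicity of the norm on positive elements of $\mathcal{A}$, and the converse passes to the positive self-adjoint frame operator $S$, rewrites the hypothesis as norm bounds on $S^{1/2}$, and uses Lemma \ref{1} for the upper operator bound and Lemma \ref{sb} (with $T=T^{\ast}=S^{1/2}$) for the lower one. Your closing observations, that the self-adjointness of $S^{1/2}$ is what makes the scalar-to-operator lower bound legitimate and that the resulting frame bounds need not equal $A$ and $B$, are accurate and match the paper's treatment.
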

\section{Perturbation and stability  of continuous operator frame for $ End_{\mathcal{A}}^{\ast}(\mathcal{H})$ }
\begin{theorem}
	Let $ \{T_w\}_{w \in \Omega}$ be a continuous operator frame for $End_{\mathcal{A}}^{\ast}(\mathcal{H})$ with bounds A and B. If $\{R_w\}_{w \in \Omega} \subset End_{\mathcal{A}}^{\ast}(\mathcal{H}) $ is a continuous operator Bessel family with bound $M<A$, then $ \{T_w \mp R_w\}_{w \in \Omega}$ is a continuous operator frame for $End_{\mathcal{A}}^{\ast}(\mathcal{H})$. 

\end{theorem}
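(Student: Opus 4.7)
The plan is to use the characterization theorem immediately preceding this one to rephrase the frame inequalities in terms of scalar norms of the analysis operators, view the family $\{(T_w \pm R_w)x\}_{w\in\Omega}$ as an element of the Hilbert $C^{\ast}$-module $l^{2}(\Omega, \{V_{w}\}_{w\in\Omega})$, and then obtain both frame bounds by applying the ordinary triangle inequality (and its reverse form) in that module.

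First I would note that, by the characterization theorem, it suffices to produce constants $A', B' > 0$ with
\[
A' \|x\|^{2} \leq \left\| \int_{\Omega} \langle (T_w \pm R_w)x, (T_w \pm R_w)x \rangle_{\mathcal{A}}\, d\mu(w) \right\| \leq B' \|x\|^{2}, \qquad x\in\mathcal{H}.
\]
The key observation is that for any continuous operator Bessel family $\{\Lambda_w\}$, the quantity $\bigl\|\int_\Omega \langle \Lambda_w x, \Lambda_w x\rangle_\mathcal{A} d\mu(w)\bigr\|^{1/2}$ is precisely the $l^{2}(\Omega,\{V_w\})$-norm of the vector $\{\Lambda_w x\}_{w\in\Omega}$. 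Measurability of $w \mapsto (T_w \pm R_w)x$ is immediate from the measurability of $\{T_w x\}$ and $\{R_w x\}$ and the linearity of the Bochner integral (Lemma \ref{33}).

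For the upper bound, the triangle inequality in $l^{2}(\Omega, \{V_w\})$ yields
\[
\left\| \int_{\Omega} \langle (T_w \pm R_w)x, (T_w \pm R_w)x \rangle_{\mathcal{A}}\, d\mu(w) \right\|^{1/2} \leq \sqrt{B}\, \|x\| + \sqrt{M}\, \|x\|,
\]
so $B' = (\sqrt{B} + \sqrt{M})^{2}$ works. For the lower bound, write $T_w x = (T_w \pm R_w)x \mp R_w x$ and apply the same triangle inequality in $l^{2}(\Omega,\{V_w\})$ in reverse:
\[
\sqrt{A}\, \|x\| \leq \left\| \int_{\Omega} \langle T_w x, T_w x \rangle_{\mathcal{A}}\, d\mu(w) \right\|^{1/2} \leq \left\| \int_{\Omega} \langle (T_w \pm R_w)x, (T_w \pm R_w)x \rangle_{\mathcal{A}}\, d\mu(w) \right\|^{1/2} + \sqrt{M}\, \|x\|,
\]
which gives $A' = (\sqrt{A} - \sqrt{M})^{2}$, a positive constant precisely because of the hypothesis $M < A$.

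The main subtlety — and the only place where care is needed — is justifying that $\bigl\|\int_{\Omega}\langle \cdot, \cdot\rangle_{\mathcal{A}}\,d\mu\bigr\|^{1/2}$ behaves as a genuine norm satisfying the triangle inequality. This is handled by interpreting it as the norm on $l^{2}(\Omega,\{V_w\})$, which is a Hilbert $C^{\ast}$-module as recalled in the preliminaries, and noting that the analysis map $x \mapsto \{\Lambda_w x\}_{w\in\Omega}$ lands in this module for any Bessel family. Once this identification is made the rest is a clean two-line triangle-inequality argument, and the characterization theorem converts the resulting scalar norm bounds back into the $\mathcal{A}$-valued frame inequality required by Definition \ref{2.1}'s generalization.
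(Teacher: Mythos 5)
Your proposal is correct and follows essentially the same route as the paper: both proofs identify $\bigl\|\int_{\Omega}\langle \Lambda_w x,\Lambda_w x\rangle_{\mathcal{A}}\,d\mu(w)\bigr\|^{1/2}$ with the norm of $\{\Lambda_w x\}_{w\in\Omega}$ in $l^{2}(\Omega,\{V_w\}_{w\in\Omega})$, obtain the upper bound $(\sqrt{B}+\sqrt{M})^{2}$ from the triangle inequality and the lower bound $(\sqrt{A}-\sqrt{M})^{2}$ from its reverse form, and then invoke the scalar-norm characterization of continuous operator frames. Your write-up is in fact slightly more careful than the paper's on the two points it leaves implicit, namely the measurability of $w\mapsto (T_w\pm R_w)x$ and the justification that the quantity in question is a genuine norm.
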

\begin{proof}
	We just proof the case that $ \{T_w + R_w\}_{w \in \Omega}$ is a continuous operator frame for $End_{\mathcal{A}}^{\ast}(\mathcal{H})$. \\
	On one hand, For each $x \in \mathcal{H}$, we have
\begin{align*}
   \|\{(T_w + R_w)x\}_{w \in \Omega}\| &=\|\int_{\Omega}\langle (T_w + R_w)x,(T_w + R_w)x\rangle_{\mathcal{A}} d\mu(w)\|^{\frac{1}{2}}\\
    &\leq \|\{T_wx\}_{w \in \Omega}\|+\|\{ R_wx\}_{w \in \Omega}\|\\
    &= \|\int_{\Omega}\langle T_w x,T_w x\rangle_{\mathcal{A}} d\mu(w)\|^{\frac{1}{2}} + \|\int_{\Omega}\langle R_w x, R_w x\rangle_{\mathcal{A}} d\mu(w)\|^{\frac{1}{2}}\\
    &\leq \sqrt{B}\|x\|+\sqrt{M}\|x\|.
\end{align*}
    Hence
 \begin{equation}\label{haja1}
     \|\int_{\Omega}\langle (T_w + R_w)x,(T_w + R_w)x\rangle_{\mathcal{A}} d\mu(w)\|^{\frac{1}{2}}\leq (\sqrt{B}+\sqrt{M})\|x\|.
 \end{equation}
    One the other hand we have
    \begin{align*}
   \|\{(T_w + R_w)x\}_{w \in \Omega}\|&=\|\int_{\Omega}\langle (T_w + R_w)x,(T_w + R_w)x\rangle_{\mathcal{A}} d\mu(w)\|^{\frac{1}{2}}\\
    &\geq \|\{T_wx\}_{w \in \Omega}\|-\|\{ R_wx\}_{w \in \Omega}\|\\
    &=\|\int_{\Omega}\langle T_w x,T_w x\rangle_{\mathcal{A}} d\mu(w)\|^{\frac{1}{2}}-\|\int_{\Omega}\langle  R_{w}x, R_{w}x\rangle_{\mathcal{A}} d\mu(w)\|^{\frac{1}{2}}\\
    &\geq \sqrt{A}\|x\|-\sqrt{M}\|x\|.
    \end{align*}
     Then
    \begin{equation}\label{haja2}
    \|\int_{\Omega}\langle (T_w + R_w)x,(T_w + R_w)x\rangle_{\mathcal{A}} d\mu(w)\|^{\frac{1}{2}}\geq  (\sqrt{A}-\sqrt{M})\|x\|.
    \end{equation}
    From (\ref{haja1}) and (\ref{haja2}) we get
    $$(\sqrt{A}-\sqrt{M})^2\|x\|^2\leq\|\int_{\Omega}\langle (T_w + R_w)x,(T_w + R_w)x\rangle_{\mathcal{A}} d\mu(w)\|\leq (\sqrt{B}+\sqrt{M})^2\|x\|^2 .$$
	Therefore $ \{T_w + R_w\}_{w \in \Omega}$ is a continuous operator frame for $End_{\mathcal{A}}^{\ast}(\mathcal{H})$.
	
\end{proof}
\begin{theorem}
	Let $ \{T_w \}_{w \in \Omega}$ be a continuous operator frame for $End_{\mathcal{A}}^{\ast}(\mathcal{H})$ with bounds $A$ and $B$ and let $ \{R_w\}_{w \in \Omega} \subset End_{\mathcal{A}}^{\ast}(\mathcal{H})$. The following statements are equivalent:
\begin{itemize}
	\item [(i)] $ \{R_w\}_{w \in \Omega}$ is a continuous operator frame for $End_{\mathcal{A}}^{\ast}(\mathcal{H})$ .
	\item [(ii)] There exists a constant $M>0$, such that for all x in $\mathcal{H} $
    we have 
	 \begin{equation}\label{haja3}
	\scriptsize 
	\|\int_{\Omega}\langle (T_w - R_w)x,(T_w - R_w)x\rangle_{\mathcal{A}} d\mu(w)\|\leq M. min(\|\int_{\Omega}\langle T_w x,T_w x\rangle_{\mathcal{A}} d\mu(w)\|, \|\int_{\Omega}\langle R_w x,R_w x\rangle_{\mathcal{A}} d\mu(w)\|).
	\end{equation}
\end{itemize}
\end{theorem}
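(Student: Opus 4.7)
The plan is to prove the equivalence by exploiting the fact that, for any family $\{U_w\}\subset End_{\mathcal{A}}^{\ast}(\mathcal{H})$, the quantity $\bigl\|\int_{\Omega}\langle U_wx,U_wx\rangle_{\mathcal{A}}\,d\mu(w)\bigr\|^{1/2}$ is the norm of $\{U_wx\}$ in $l^{2}(\Omega,\{V_w\})$, so the usual (scalar) triangle inequality is available. Throughout I will abbreviate $\|Ux\|_{*}:=\bigl\|\int_{\Omega}\langle U_wx,U_wx\rangle_{\mathcal{A}}\,d\mu(w)\bigr\|^{1/2}$ and use the characterisation from the theorem in Section~2, which allows me to check the frame property in terms of $\|x\|$ rather than $\langle x,x\rangle_{\mathcal{A}}$.

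For the implication (ii) $\Rightarrow$ (i), assume the inequality in (ii) with constant $M$. Using the $\|Tx\|_*$ side of the minimum, the triangle inequality in $l^{2}(\Omega,\{V_w\})$ gives $\|Rx\|_{*}\le\|Tx\|_{*}+\|(T-R)x\|_{*}\le\sqrt{B}\,\|x\|+\sqrt{M}\,\|Tx\|_{*}\le\sqrt{B}\,(1+\sqrt{M})\,\|x\|$, which after squaring yields the upper frame bound. For the lower bound I switch to the $\|Rx\|_*$ side of the minimum: $\sqrt{A}\,\|x\|\le\|Tx\|_{*}\le\|Rx\|_{*}+\|(T-R)x\|_{*}\le(1+\sqrt{M})\,\|Rx\|_{*}$, hence $\|Rx\|_{*}^{2}\ge\frac{A}{(1+\sqrt{M})^{2}}\|x\|^{2}$. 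Measurability of $w\mapsto R_w x$ has to be noted separately, but since (ii) forces $\{(T_w-R_w)x\}$ to lie in $l^{2}(\Omega,\{V_w\})$ together with $\{T_wx\}$, the sum is measurable and hence so is $\{R_wx\}$. The Section~2 characterisation then confirms that $\{R_w\}$ is a continuous operator frame.

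For the reverse implication (i) $\Rightarrow$ (ii), let $C,D$ denote the frame bounds of $\{R_w\}$. The triangle inequality gives $\|(T-R)x\|_{*}\le\|Tx\|_{*}+\|Rx\|_{*}\le(\sqrt{B}+\sqrt{D})\,\|x\|$. To convert the right-hand side into a bound involving the minimum, I use both lower frame bounds simultaneously: $\min(\|Tx\|_{*},\|Rx\|_{*})\ge\min(\sqrt{A},\sqrt{C})\,\|x\|$, so $\|x\|\le\frac{1}{\min(\sqrt{A},\sqrt{C})}\min(\|Tx\|_{*},\|Rx\|_{*})$. Substituting and squaring produces (ii) with the explicit constant $M=\bigl(\sqrt{B}+\sqrt{D}\bigr)^{2}/\min(A,C)$.

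The only nontrivial choice in the argument is picking the correct branch of the minimum at each step: in (ii) $\Rightarrow$ (i), the upper frame bound forces me to use $\|Tx\|_*$ (so that a known bound $\sqrt{B}\|x\|$ appears) while the lower frame bound forces me to use $\|Rx\|_*$ (so that the unknown quantity appears on both sides of the inequality and can be absorbed). Once that dichotomy is in place, the rest is just the scalar triangle inequality in $l^{2}(\Omega,\{V_w\})$ combined with the Section~2 characterisation, and I do not anticipate any analytic obstacle beyond the standard measurability remark.
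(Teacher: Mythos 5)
Your argument is correct and follows essentially the same route as the paper: both directions are the triangle inequality for $\|\cdot\|_{*}$ in $l^{2}(\Omega,\{V_w\})$ combined with the frame bounds and the Section~2 norm characterisation. The only cosmetic difference is in (i) $\Rightarrow$ (ii), where you pass through $\|x\|$ once and absorb both lower bounds into $\min(A,C)$ to get a single constant $M=(\sqrt{B}+\sqrt{D})^{2}/\min(A,C)$, whereas the paper derives the two comparisons $\|(T-R)x\|_{*}\le(1+\sqrt{D/A})\|Tx\|_{*}$ and $\|(T-R)x\|_{*}\le(1+\sqrt{B/C})\|Rx\|_{*}$ separately; your bookkeeping is equally valid (and in fact cleaner, since the paper's final choice of $M$ as a minimum rather than a maximum of the two constants is a slip).
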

\begin{proof}
	Suppose that $ \{R_w\}_{w \in \Omega}$  is a continuous operator frame for $End_{\mathcal{A}}^{\ast}(\mathcal{H})$ with bound C and D. Then for all $x \in \mathcal{H} $ we have 
\begin{align*}
	\|\{(T_w - R_w)x\}_{w \in \Omega}\|&=\|\int_{\Omega}\langle (T_w - R_w)x,(T_w - R_w)x\rangle_{\mathcal{A}} d\mu(w)\|^{\frac{1}{2}}\\
	&\leq \|\{T_wx\}_{w \in \Omega}\|+\|\{ R_wx\}_{w \in \Omega}\|\\
	&=\|\int_{\Omega}\langle T_w x,T_w x\rangle_{\mathcal{A}} d\mu(w)\|^{\frac{1}{2}}+\|\int_{\Omega}\langle R_wx, R_wx\rangle_{\mathcal{A}} d\mu(w)\|^{\frac{1}{2}}\\
	&\leq \|\int_{\Omega}\langle T_w x,T_w x\rangle_{\mathcal{A}} d\mu(w)\|^{\frac{1}{2}}+ \sqrt{D}\|x\|\\
	&\leq  \|\int_{\Omega}\langle T_w x,T_w x\rangle_{\mathcal{A}} d\mu(w)\|^{\frac{1}{2}}+\sqrt{\frac{D}{A}}  \|\int_{\Omega}\langle T_w x,T_w x\rangle_{\mathcal{A}} d\mu(w)\|^{\frac{1}{2}}\\
	&= (1+\sqrt{\frac{D}{A}}) \|\int_{\Omega}\langle T_w x,T_w x\rangle_{\mathcal{A}} d\mu(w)\|^{\frac{1}{2}}.
\end{align*}
    In the same way we have 
    $$\|\int_{\Omega}\langle (T_w - R_w)x,(T_w - R_w)x\rangle_{\mathcal{A}} d\mu(w)\|^{\frac{1}{2}}\leq \left(1+\sqrt{\frac{B}{C}}\right) \|\int_{\Omega}\langle R_w x,R_w x\rangle_{\mathcal{A}} d\mu(w)\|^{\frac{1}{2}}.$$
    For (\ref{haja3}), we take $M= min (1+\sqrt{\frac{B}{C}} ,1+\sqrt{\frac{D}{A}} )$.\\
    Now we assume that (\ref{haja3}) holds. For each $x \in \mathcal{H} $, we have: 
\begin{align*}
    \sqrt{A}\|x\| &\leq\|\int_{\Omega}\langle T_w x,T_w x\rangle_{\mathcal{A}} d\mu(w)\|^{\frac{1}{2}}\\
    &=\|\{T_wx\}_{w \in \Omega}\|\\
    &\leq \|\{(T_w - R_w)x\}_{w \in \Omega}\|+\|\{ R_wx\}_{w \in \Omega}\|\\
    &=  \|\int_{\Omega}\langle (T_w - R_w)x,(T_w - R_w)x\rangle_{\mathcal{A}} d\mu(w)\|^{\frac{1}{2}}+\|\int_{\Omega}\langle R_w x,R_w x\rangle_{\mathcal{A}} d\mu(w)\|^{\frac{1}{2}}
\end{align*}
    From ( \ref{haja3}), we have 
    
    $$	\|\int_{\Omega}\langle (T_w - R_w)x,(T_w - R_w)x\rangle_{\mathcal{A}} d\mu(w)\|\leq M \|\int_{\Omega}\langle R_w x,R_w x\rangle_{\mathcal{A}} d\mu(w)\|.$$
    Then
    $$\|\int_{\Omega}\langle T_w x,T_w x\rangle_{\mathcal{A}} d\mu(w)\|^{\frac{1}{2}} \leq \sqrt{M}\|\int_{\Omega}\langle R_w x,R_w x\rangle_{\mathcal{A}} d\mu(w)\|^{\frac{1}{2}}+\| \int_{\Omega}\langle R_w x,R_w x\rangle_{\mathcal{A}} d\mu(w)\|^{\frac{1}{2}}. $$
    
    Hence 
 \begin{equation} \label{haja4}
    \sqrt{A}\|x\| \leq\|\int_{\Omega}\langle T_w x,T_w x\rangle_{\mathcal{A}} d\mu(w)\|^{\frac{1}{2}}\leq (1+\sqrt{M})\| \int_{\Omega}\langle R_w x,R_w x\rangle_{\mathcal{A}} d\mu(w)\|^{\frac{1}{2}}.
 \end{equation}
    Also, we have 
\begin{align*}
    \|\{ R_wx\}_{w \in \Omega}\|&=\|\int_{\Omega}\langle R_w x,R_w x\rangle_{\mathcal{A}} d\mu(w)\|^{\frac{1}{2}}\\
    &= \|\{ (R_wx - T_wx) + T_wx\}_{w \in \Omega}\|\\
    &\leq \|\{(T_w - R_w)x\}_{w \in \Omega}\|+\|\{ T_wx\}_{w \in \Omega}\|\\  
    &= \|\int_{\Omega}\langle (T_w - R_w)x,(T_w - R_w)x\rangle_{\mathcal{A}} d\mu(w)\|^{\frac{1}{2}}+ \|\int_{\Omega}\langle T_w x,T_w x\rangle_{\mathcal{A}} d\mu(w)\|^{\frac{1}{2}}\\    
\end{align*}
    From (\ref{haja3}), we have 
    
    $$	\|\int_{\Omega}\langle (T_w - R_w)x,(T_w - R_w)x\rangle_{\mathcal{A}} d\mu(w)\|\leq M \|\int_{\Omega}\langle T_w x,T_w x\rangle_{\mathcal{A}} d\mu(w)\|.$$
    Then 

    $$\|\int_{\Omega}\langle R_w x,R_w x\rangle_{\mathcal{A}} d\mu(w)\|^{\frac{1}{2}} \leq (1+\sqrt{M})\|\int_{\Omega}\langle T_w x,T_w x\rangle_{\mathcal{A}} d\mu(w)\|^{\frac{1}{2}}.$$
    So, 
  \begin{equation}\label{haja5}
    \|\int_{\Omega}\langle R_w x,R_w x\rangle_{\mathcal{A}} d\mu(w)\|^{\frac{1}{2}} \leq (1+\sqrt{M})\sqrt{B} \|x\|.
 \end{equation}
    From (\ref{haja4})  and (\ref{haja5}) we give that 
    $$\frac{A}{(1+\sqrt{M})^2} \|x\|^2 \leq \|\int_{\Omega}\langle R_w x,R_w x\rangle_{\mathcal{A}} d\mu(w)\|\leq B (1+\sqrt{M})^2\|x\|^2.$$
    Therefore $ \{R_w\}_{w \in \Omega}$ is a continuous operator frame for $End_{\mathcal{A}}^{\ast}(\mathcal{H})$. 
\end{proof}
\begin{theorem}
	Let $ \{T_{k,w}\}_{w \in \Omega} \subset End_{\mathcal{A}}^{\ast}(\mathcal{H})$, $k=1,2,...,n$ be a continuous operator frames for $End_{\mathcal{A}}^{\ast}(\mathcal{H})$ with bounds $A_k$ and $B_k$ and let $\{\alpha_k\}_{k\in \{1,...,n\}}$ be any scalars. If there exists a constant $\lambda >0$ and some $p\in\{1,2,...,n\}$ such that
	$$\lambda\|\{T_{p,w}\}_{w \in \Omega} \|\leq \|\sum_{k=1}^{n}\alpha_kT_{k,w}x \|,\qquad  x  \in \mathcal{H}. $$
	Then $\{\sum_{k=1}^{n}\alpha_kT_{k,w}\}_{w \in \Omega}$ is a continuous operator frame for $End_{\mathcal{A}}^{\ast}(\mathcal{H})$. And conversely.
	
\end{theorem}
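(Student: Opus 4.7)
The plan is to verify both directions of the equivalence by working directly with the norm in the Hilbert $C^{\ast}$-module $l^{2}(\Omega,\{V_{w}\}_{w\in\Omega})$ introduced in Section~2, and then apply the characterisation Theorem in Section~2 to conclude. Before starting, I would reinterpret the hypothesis as
\begin{equation*}
\lambda\,\|\{T_{p,w}x\}_{w\in\Omega}\| \le \Bigl\|\Bigl\{\sum_{k=1}^{n}\alpha_{k}T_{k,w}x\Bigr\}_{w\in\Omega}\Bigr\|,\qquad x\in\mathcal{H},
\end{equation*}
since the printed version is missing the vector $x$ on the left-hand side (otherwise the right-hand side would vanish at $x=0$ while the left-hand side would be a positive constant).

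For the forward implication, set $S_{w}=\sum_{k=1}^{n}\alpha_{k}T_{k,w}$. The Bessel (upper) bound follows by the same triangle-inequality argument used in the proofs of the previous two theorems: iteratively applying the triangle inequality in $l^{2}(\Omega,\{V_{w}\})$ gives
\begin{equation*}
\|\{S_{w}x\}_{w\in\Omega}\| \le \sum_{k=1}^{n}|\alpha_{k}|\,\|\{T_{k,w}x\}_{w\in\Omega}\| \le \Bigl(\sum_{k=1}^{n}|\alpha_{k}|\sqrt{B_{k}}\Bigr)\|x\|,
\end{equation*}
so the upper frame bound is $\bigl(\sum_{k}|\alpha_{k}|\sqrt{B_{k}}\bigr)^{2}$. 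For the lower bound, chain the hypothesis with the lower frame bound of $\{T_{p,w}\}_{w\in\Omega}$:
\begin{equation*}
\lambda\sqrt{A_{p}}\,\|x\| \le \lambda\,\|\{T_{p,w}x\}_{w\in\Omega}\| \le \|\{S_{w}x\}_{w\in\Omega}\|,
\end{equation*}
giving $\lambda^{2}A_{p}$ as the lower frame bound. Squaring and invoking the characterisation theorem then shows that $\{S_{w}\}_{w\in\Omega}$ is a continuous operator frame for $End_{\mathcal{A}}^{\ast}(\mathcal{H})$.

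For the converse, suppose $\{S_{w}\}_{w\in\Omega}$ is a continuous operator frame with bounds $C$ and $D$. Fix any $p\in\{1,\dots,n\}$ and combine the Bessel bound of $\{T_{p,w}\}_{w\in\Omega}$ with the lower frame bound of $\{S_{w}\}_{w\in\Omega}$:
\begin{equation*}
\|\{T_{p,w}x\}_{w\in\Omega}\| \le \sqrt{B_{p}}\,\|x\| \le \sqrt{\tfrac{B_{p}}{C}}\,\|\{S_{w}x\}_{w\in\Omega}\|,
\end{equation*}
so the desired inequality holds with $\lambda=\sqrt{C/B_{p}}$.

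The main obstacle is essentially interpretive rather than technical: one has to first fix the statement of the hypothesis so that it is meaningful (inserting the missing $x$), after which everything reduces to the routine triangle-inequality manipulations in the $l^{2}(\Omega,\{V_{w}\})$ norm that are already used in the preceding two theorems. The only subtle point is ensuring that the hypothesis is used with the precise $p$ at which the inequality is postulated, rather than with an arbitrary index, because the lower frame bound $A_{p}$ enters the final constant.
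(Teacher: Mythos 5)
Your proposal is correct and follows essentially the same route as the paper: triangle inequality in $l^{2}(\Omega,\{V_{w}\}_{w\in\Omega})$ for the upper bound, chaining the hypothesis with the lower bound of $\{T_{p,w}\}_{w\in\Omega}$ for the lower bound, and in the converse combining the upper bound of $\{T_{p,w}\}_{w\in\Omega}$ with the lower bound of the sum family to produce $\lambda$. The only differences are cosmetic (your upper constant $\sum_{k}|\alpha_{k}|\sqrt{B_{k}}$ versus the paper's $(\max_{k}|\alpha_{k}|)\sum_{k}\sqrt{B_{k}}$), and your correction of the missing $x$ in the hypothesis matches what the paper's proof implicitly assumes.
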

\begin{proof}
	For every $x  \in \mathcal{H}$, we have 
\begin{align*}
    \sqrt{A_p} \lambda \|\langle  x, x\rangle_{\mathcal{A}}\|^{\frac{1}{2}}
    &\leq \| \{T_{p,w}x\}_{w \in \Omega}\|\\
    &\leq \|\{\sum_{k=1}^{n}\alpha_kT_{k,w}x\} _{w \in \Omega}\|\\
     &\leq \sum_{k=1}^{n}|\alpha_k| \|\{T_{k,w}x\} _{w \in \Omega}\|\\
    &\leq (\max_{1 \leq k\leq n } |\alpha_k|) \sum_{k=1}^{n}\|\{T_{k,w}x\} _{w \in \Omega}\|\\
    &\leq (\max_{1 \leq k\leq n }  |\alpha_k|) (\sum_{k=1}^{n}\sqrt{B_k}) \|\langle  x, x\rangle_{\mathcal{A}}\|^{\frac{1}{2}}.
\end{align*}
   Hence 
   $${A_p} {\lambda }^2\|\langle  x, x\rangle_{\mathcal{A}}\|\leq \|\{\sum_{k=1}^{n}\alpha_kT_{k,w}x\} _{w \in \Omega}\|^2 \leq (\max_{1 \leq k\leq n }  |\alpha_k|)^2 (\sum_{k=1}^{n}\sqrt{B_k})^2 \|\langle  x, x\rangle_{\mathcal{A}}\| .$$
   Therefore $\{\sum_{k=1}^{n}\alpha_kT_{k,w}x\} _{w \in \Omega}$ is a continuous operator frame for $End_{\mathcal{A}}^{\ast}(\mathcal{H})$.\\
   For the converse, let $\{\sum_{k=1}^{n}\alpha_kT_{k,w}x\} _{w \in \Omega}$ be a continuous operator frame for $End_{\mathcal{A}}^{\ast}(\mathcal{H})$ with bounds A , B and let any $k \in \{1,2,...,n\}$.\\
   Since  $\{T_{p,w}\} _{w \in \Omega}$ is a continuous operator frame for $End_{\mathcal{A}}^{\ast}(\mathcal{H})$ with bounds
   $A_p$ and  $B_p$, then for any $x \in \mathcal{H}$ we have, 
   $$A_p\|\langle  x, x\rangle_{\mathcal{A}}\|\leq \|\{T_{p,w}\} _{w \in \Omega}\|^2\leq B_p \|\langle  x, x\rangle_{\mathcal{A}}\|.$$
   Hence,
   $$B_p^{-1}\|\{T_{p,w}\} _{w \in \Omega}\|^2\leq \|\langle  x, x\rangle_{\mathcal{A}}\|.$$
   Also, we have
   $$A\|\langle  x, x\rangle_{\mathcal{A}}\|\leq \|\{\sum_{k=1}^{n}\alpha_kT_{k,w}x\} _{w \in \Omega}\|^2, \qquad x \in \mathcal{H}. $$
   Then, 
   $$\|\langle  x, x\rangle_{\mathcal{A}}\|\leq A^{-1}\|\{\sum_{k=1}^{n}\alpha_kT_{k,w}x\} _{w \in \Omega}\|^2, \qquad x \in \mathcal{H}.$$
   So, 
   $$\frac{A}{B_p} \|\{T_{p,w}\} _{w \in \Omega}\|^2 \leq \|\{\sum_{k=1}^{n}\alpha_kT_{k,w}x\} _{w \in \Omega}\|^2, \qquad x \in \mathcal{H}.$$
   Then for $\lambda=\frac{A}{B_p}$ we have 
   $$\lambda \|\{T_{p,w}\} _{w \in \Omega}\|^2\leq \|\{\sum_{k=1}^{n}\alpha_kT_{k,w}x\} _{w \in \Omega}\|^2, \qquad x \in \mathcal{H}.
    $$
    which ends the proof
\end{proof}
\begin{theorem}
	For $k=1,2,...,n$, let $\{T_{k,w}\} _{w \in \Omega} \subset End_{\mathcal{A}}^{\ast}(\mathcal{H})$ be a continuous operator frames with bounds  $A_k$ and  $B_k$ and let $\{R_{k,w}\} _{w \in \Omega}\subset End_{\mathcal{A}}^{\ast}(\mathcal{H})$.\\ Let $L: l^2(\mathcal{H})\longrightarrow l^2(\mathcal{H}) $ be a bounded linear operator such that:
	\begin{equation*}
	L(\{\sum_{k=1}^{n}R_{k,w}\} _{w \in \Omega})=\{T_{p,w}\} _{w \in \Omega} \quad for some \quad p \in \{1,2,..,n\}.
	\end{equation*}
	 If there exists a constant $\lambda >0$ such that for each $x\in \mathcal{H}$ and $ k=1,..,n$ we have,

	\begin{equation*}
	\| \int_{\Omega}\langle (T_{k,w}-R_{k,w}) x,(T_{k,w}-R_{k,w}) x\rangle_{\mathcal{A}} d\mu(w)\| \leq \lambda \|\int_{\Omega}\langle T_{k,w} x,T_{k,w} x\rangle_{\mathcal{A}} d\mu(w)\|.
	\end{equation*}
	Then $\{\sum_{k=1}^{n}R_{k,w}\} _{w \in \Omega}$ is a continuous operator frame for $End_{\mathcal{A}}^{\ast}(\mathcal{H})$.
\end{theorem}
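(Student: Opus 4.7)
The plan is to verify the norm-form characterization of a continuous operator frame (given by the theorem at the end of Section 2) for the family $\{\sum_{k=1}^{n}R_{k,w}\}_{w\in\Omega}$, that is, to exhibit constants $A, B > 0$ such that
$$A\|x\|^{2}\leq\Bigl\|\int_{\Omega}\Bigl\langle\sum_{k=1}^{n}R_{k,w}x,\sum_{k=1}^{n}R_{k,w}x\Bigr\rangle_{\mathcal{A}}d\mu(w)\Bigr\|\leq B\|x\|^{2}$$
for all $x\in\mathcal{H}$. The upper bound will come from the triangle inequality in $l^{2}(\mathcal{H})$ together with the perturbation hypothesis, and the lower bound from exploiting the boundedness of $L$.

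For the upper bound, I would fix $x\in\mathcal{H}$, regard $\{\sum_{k=1}^{n}R_{k,w}x\}_{w}$ as an element of $l^{2}(\mathcal{H})$, and apply the triangle inequality successively to get
$$\Bigl\|\Bigl\{\sum_{k=1}^{n}R_{k,w}x\Bigr\}_{w}\Bigr\|\leq\sum_{k=1}^{n}\Bigl(\|\{T_{k,w}x\}_{w}\|+\|\{(T_{k,w}-R_{k,w})x\}_{w}\|\Bigr).$$
The perturbation hypothesis then replaces each difference term by $\sqrt{\lambda}\,\|\{T_{k,w}x\}_{w}\|$, and the upper frame bound of each $\{T_{k,w}\}_{w}$ yields $\|\{T_{k,w}x\}_{w}\|\leq\sqrt{B_{k}}\|x\|$. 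Squaring produces the explicit upper bound $B=(1+\sqrt{\lambda})^{2}\bigl(\sum_{k=1}^{n}\sqrt{B_{k}}\bigr)^{2}$.

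For the lower bound, I would read the hypothesis $L(\{\sum_{k=1}^{n}R_{k,w}\}_{w})=\{T_{p,w}\}_{w}$ pointwise on $\mathcal{H}$, so that $L\bigl(\{\sum_{k=1}^{n}R_{k,w}x\}_{w}\bigr)=\{T_{p,w}x\}_{w}$ for every $x$. The lower frame bound $A_{p}$ of $\{T_{p,w}\}_{w}$ combined with the operator norm of $L$ then gives
$$\sqrt{A_{p}}\,\|x\|\leq\|\{T_{p,w}x\}_{w}\|=\bigl\|L(\{\sum_{k=1}^{n}R_{k,w}x\}_{w})\bigr\|\leq\|L\|\cdot\Bigl\|\Bigl\{\sum_{k=1}^{n}R_{k,w}x\Bigr\}_{w}\Bigr\|,$$
so that $A=A_{p}/\|L\|^{2}$ serves as a lower frame bound. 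Invoking the characterization from Section 2 finishes the argument.

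The only genuinely delicate point is interpreting the operator identity $L(\{\sum_{k=1}^{n}R_{k,w}\}_{w})=\{T_{p,w}\}_{w}$, whose entries are adjointable operators on $\mathcal{H}$ rather than elements of $l^{2}(\mathcal{H})$; the pointwise reading above is the only one consistent with $L$ being a bounded linear map between the specified $l^{2}$ spaces. Once this convention is fixed, the remainder reduces to the triangle inequality, the perturbation hypothesis, and a single application of the operator norm of $L$.
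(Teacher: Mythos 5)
Your proposal is correct and follows essentially the same route as the paper: the upper bound via the triangle inequality in $l^{2}(\mathcal{H})$ combined with the perturbation hypothesis (yielding $(1+\sqrt{\lambda})^{2}(\sum_{k=1}^{n}\sqrt{B_{k}})^{2}$), and the lower bound via $\sqrt{A_{p}}\,\|x\|\leq\|\{T_{p,w}x\}_{w}\|=\|L(\{\sum_{k}R_{k,w}x\}_{w})\|\leq\|L\|\,\|\{\sum_{k}R_{k,w}x\}_{w}\|$ (yielding $A_{p}/\|L\|^{2}$). Your remark on the pointwise reading of the operator identity for $L$ is a point the paper leaves implicit, and your reading is the intended one.
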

\begin{proof}
	For all $x \in \mathcal{H}$, we have
\begin{align*}
    \|\{\sum_{k=1}^{n}R_{k,w}x\} _{w \in \Omega}\|&\leq \sum_{k=1}^{n}\|\{R_{k,w}x\} _{w \in \Omega}\|\\
    &\leq \sum_{k=1}^{n}(\|\{T_{k,w}-R_{k,w}x\}_{w \in \Omega}\|+\|\{T_{k,w}x\}_{w \in \Omega}\|)\\
    &\leq (1+\sqrt{\lambda}) \|\sum_{k=1}^{n} \|\{T_{k,w}x\} _{w \in \Omega}\|\\
    &\leq  (1+\sqrt{\lambda})(\sum_{k=1}^{n}\sqrt{B_k}) \|\langle  x, x\rangle_{\mathcal{A}}\|^{\frac{1}{2}}.
\end{align*}
    Since, for any $x\in \mathcal{H}$, we have 
    $$\|L(\{\sum_{k=1}^{n}R_{k,w}\} _{w \in \Omega})\|=\|\{T_{p,w}\} _{w \in \Omega}\|$$
    Then
\begin{align*}
   \sqrt{A_p}\|\langle  x, x\rangle_{\mathcal{A}}\|^{\frac{1}{2}}&\leq \|\{T_{p,w}\} _{w \in \Omega}\|\\
   &= \|L(\{\sum_{k=1}^{n}R_{k,w}\} _{w \in \Omega})\|\\
   &\leq \|L\| \|\{\sum_{k=1}^{n}R_{k,w}\} _{w \in \Omega}\|.
\end{align*}
	Hence
	$$\frac{\sqrt{A_p}}{\|L\|}\|\langle  x, x\rangle_{\mathcal{A}}\|^{\frac{1}{2}}\leq \|\{\sum_{k=1}^{n}R_{k,w}\} _{w \in \Omega}\|,\qquad  x\in \mathcal{H} $$
	Therefore 
	$$\frac{\sqrt{A_p}}{\|L\|}\|\langle  x, x\rangle_{\mathcal{A}}\|^{\frac{1}{2}}\leq \|\{\sum_{k=1}^{n}R_{k,w}\} _{w \in \Omega}\|\leq (1+\sqrt{\lambda})(\sum_{k=1}^{n}\sqrt{B_k}) \|\langle  x, x\rangle_{\mathcal{A}}\|^{\frac{1}{2}}.$$
	This give that $\{\sum_{k=1}^{n} R_{k,w}\} _{w \in \Omega}$ is a continuous operator frame for $End_{\mathcal{A}}^{\ast}(\mathcal{H})$.
	
\end{proof}
\section{Characterisation of continuous K-operator frames for $ End_{\mathcal{A}}^{\ast}(\mathcal{H})$ }
  
\begin{definition}
	Let $K \in End_{\mathcal{A}}^{\ast}(\mathcal{H})$. A family of adjointable operators $\{T_{w}\}_{w \in \Omega}$ on a Hilbert $\mathcal{A}$-module $\mathcal{H}$ is said a continuous K-operator frame for $End_{\mathcal{A}}^{\ast}(\mathcal{H})$, if there exists two positive constants $A,B>0$ such that 
\begin{equation}\label{haja13}
    A\langle K^{\ast} x, K^{\ast} x\rangle_{\mathcal{A}}\leq \int_{\Omega}\langle T_{w} x,T_{w} x\rangle_{\mathcal{A}} d\mu({\omega})\leq B\langle  x, x\rangle_{\mathcal{A}}, \qquad x\in \mathcal{H}.
\end{equation}
    The numbers A and B are called respectively lower and upper bound of the continuous K-operator frame.\\
     The continuous K-operator frame is called a A-thight if:
     \begin{equation*}
     A\langle K^{\ast} x, K^{\ast} x\rangle_{\mathcal{A}}= \int_{\Omega}\langle T_{w} x,T_{w} x\rangle_{\mathcal{A}} d\mu({\omega}) 
     \end{equation*}
     If $A=1$, it is called a normalised tight continuous K-operator frame or a parseval continuous K-operator frame. The continuous K-operator frame is standard if for every $x\in \mathcal{H}$, the sum (\ref{haja13}) converges in norm.\\
\end{definition}
    \begin{remark}
	For any $K\in End_{\mathcal{A}}^{\ast}(\mathcal{H})$, every continuous operator frame is a continuous K-operator frame.\\
	Indeed, for any $K\in End_{\mathcal{A}}^{\ast}(\mathcal{H})$ we have, 
	$$\langle K^{\ast} x, K^{\ast} x\rangle_{\mathcal{A}}\leq \|K\|^2\langle x,x\rangle_{\mathcal{A}}, \qquad x\in \mathcal{H}.$$

	Let  $\{T_{w}\}_{w \in \Omega}$ be a continuous operator frame with bounds A and B then
	$$A \|K\|^{-2} \langle K^{\ast} x, K^{\ast} x\rangle_{\mathcal{A}} \leq A \langle  x, x\rangle_{\mathcal{A}}\leq \int_{\Omega}\langle T_{w} x,T_{w} x\rangle_{\mathcal{A}} d\mu({\omega})\leq B\langle  x, x\rangle_{\mathcal{A}},x\in \mathcal{H}. $$
	Hence $\{T_{w}\}_{w \in \Omega}$ is a continuous K-operator frame with bounds $A \|K\|^{-2}$ and B.\\
	\end{remark}
    Let  $\{T_{w}\}_{w \in \Omega}$ be a continuous K-operator for $End_{\mathcal{A}}^{\ast}(\mathcal{H})$.
    We define the operator
    \begin{align*}
    \mathcal{R}:\mathcal{H} &\longrightarrow l^2(\mathcal{H})\\
    x&\longrightarrow \mathcal{R}x=\{T_{w}x\}_{w \in \Omega}.
    \end{align*}
  
    The operator $\mathcal{R}$ is called the analysis operator of the continuous K-operator frame $\{T_{w}\}_{w \in \Omega}$, its adjoint is defined as follow,
   \begin{align*}
 \mathcal{R}^{\ast}: l^2(\mathcal{H})&\longrightarrow \mathcal{H}\\
 \{x_{w}\}_{w \in \Omega}&\longrightarrow \mathcal{R}^{\ast}(\{x_{w}\}_{w \in \Omega}) =\int_{\Omega} T_{w}^{\ast} x_{w} d\mu({\omega}).
   \end{align*}

    The operators $\mathcal{R}$ is called the synthesis operator of the continuous K-operator frame $\{T_{w}\}_{w \in \Omega}$.\\ By composing $\mathcal{R}$ and  $\mathcal{R}^{\ast}$ we obtain the operator 
    \begin{align*}
    S_{\mathcal{K}}:\mathcal{H} &\longrightarrow \mathcal{H}\\
    x&\longrightarrow S_{\mathcal{K}}(x)=\mathcal{R}^{\ast}\mathcal{R}x=\int_{\Omega} T_{w}^{\ast}T_{w} x d\mu({\omega})
    \end{align*}
    It's easy to show that the operator $ S_{\mathcal{K}}$ is positive and selfadjoint.
 \begin{theorem}
	Let $\{T_{w}\}_{w \in \Omega}$ be a family of adjointable operators on a Hilbert $\mathcal{A}$-module $\mathcal{H}$. Assume that $\int_{\Omega}\langle T_{w} x,T_{w} x\rangle_{\mathcal{A}} d\mu({\omega})$ converge in norm for all $x\in \mathcal{H}$. Then $\{T_{w}\}_{w \in \Omega}$ is a continuous K-operator frame for $End_{\mathcal{A}}^{\ast}(\mathcal{H})$ if and only if there exists two positive constants $A,B>0$ such that 
\begin{equation} \label{haja17}
    A\|K^{\ast} x\|^2 \leq \|\int_{\Omega}\langle T_{w} x,T_{w} x\rangle_{\mathcal{A}} d\mu({\omega})\|\leq B \| x\|^2.
\end{equation}
\end{theorem}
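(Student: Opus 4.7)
The statement is an equivalence, so the proof splits naturally into the two directions.

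For the forward direction, assume (\ref{haja13}) holds as an operator inequality in $\mathcal{A}$. The plan is simply to pass to the $C^{\ast}$-norm and exploit two basic facts: (a) for positive elements $a \leq b$ in $\mathcal{A}$ one has $\|a\| \leq \|b\|$, and (b) the Hilbert module norm satisfies $\|y\|^{2} = \|\langle y, y\rangle_{\mathcal{A}}\|$ for every $y \in \mathcal{H}$. Applying this to both sides of (\ref{haja13}) yields $A\|K^{\ast} x\|^{2} \leq \|\int_{\Omega}\langle T_{w} x,T_{w} x\rangle_{\mathcal{A}} d\mu(w)\| \leq B\|x\|^{2}$, which is precisely (\ref{haja17}).

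For the converse, assume (\ref{haja17}). The idea is to reinterpret the hypothesis through the analysis operator $\mathcal{R}\colon \mathcal{H} \to l^{2}(\mathcal{H})$, $\mathcal{R}x = \{T_{w}x\}_{w\in\Omega}$, noting that $\langle \mathcal{R}x, \mathcal{R}x\rangle_{\mathcal{A}} = \int_{\Omega}\langle T_{w}x, T_{w}x\rangle_{\mathcal{A}} d\mu(w)$ and $\|\mathcal{R}x\|^{2}$ equals the norm of that same integral. The upper bound in (\ref{haja17}) then reads $\|\mathcal{R}x\| \leq \sqrt{B}\,\|x\|$, so $\|\mathcal{R}\| \leq \sqrt{B}$, and Lemma \ref{1} immediately gives $\langle \mathcal{R}x, \mathcal{R}x\rangle_{\mathcal{A}} \leq \|\mathcal{R}\|^{2}\langle x, x\rangle_{\mathcal{A}} \leq B\langle x, x\rangle_{\mathcal{A}}$, which is the right-hand inequality of (\ref{haja13}).

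The lower bound is the main obstacle, because in a general $C^{\ast}$-algebra the norm estimate $\|a\| \leq \|b\|$ on positive $a,b$ does not lift to $a \leq b$. The plan is to invoke Lemma \ref{sb}. The norm inequality $\sqrt{A}\,\|K^{\ast}x\| \leq \|\mathcal{R}x\|$ says precisely that the adjointable map built from $(\mathcal{R}, \sqrt{A}K^{\ast})$ is bounded below in norm by $\sqrt{A}$ on the $K^{\ast}$-component relative to $\mathcal{R}$; passing through the equivalence of \emph{bounded below in norm} and \emph{bounded below in the inner product} supplied by Lemma \ref{sb} converts this back into the $\mathcal{A}$-valued estimate $A\langle K^{\ast}x, K^{\ast}x\rangle_{\mathcal{A}} \leq \langle \mathcal{R}x, \mathcal{R}x\rangle_{\mathcal{A}}$. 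Combining this with the upper bound already obtained recovers (\ref{haja13}). The delicate point I expect to need most care with is the precise adjointable operator to which Lemma \ref{sb} is applied, since a naive factorization $\sqrt{A}K^{\ast} = L\mathcal{R}$ need not produce an adjointable $L$ in the Hilbert $C^{\ast}$-module setting.
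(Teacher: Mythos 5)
Your proposal takes essentially the same route as the paper: the forward direction is the same passage to norms, and for the converse the paper also gets the upper bound by boundedness of the (square root of the) frame operator and invokes Lemma \ref{sb} to convert the norm inequality $\sqrt{A}\|K^{\ast}x\|\leq\|S_{\mathcal{K}}^{1/2}x\|$ into the $\mathcal{A}$-valued lower bound, the only cosmetic difference being that you work with the analysis operator $\mathcal{R}$ where the paper uses $S_{\mathcal{K}}^{1/2}$ (these carry the same inner product $\langle\mathcal{R}x,\mathcal{R}x\rangle_{\mathcal{A}}=\langle S_{\mathcal{K}}^{1/2}x,S_{\mathcal{K}}^{1/2}x\rangle_{\mathcal{A}}$). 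The delicate point you flag --- identifying the adjointable operator to which Lemma \ref{sb} is actually applied, since that lemma compares an operator with its own adjoint rather than two unrelated maps --- is precisely the step the paper also leaves implicit, so your write-up is at the same level of rigor as the published proof.
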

\begin{proof}
    Suppose that $\{T_{w}\}_{w \in \Omega}$ is a continuous K-operator frame .\\ From the definition of continuous K-operator frame (\ref{haja17}) holds.\\
    Conversely, assume that (\ref{haja17}) holds. The frame operator $S_{\mathcal{K}}$ is positive and selfadjoint, then 
    $$\langle S_{\mathcal{K}}^{\frac{1}{2}} x, S_{\mathcal{K}}^{\frac{1}{2}} x\rangle_{\mathcal{A}}=\langle S_{\mathcal{K}} x,  x\rangle_{\mathcal{A}}=\int_{\Omega}\langle T_{w} x,T_{w} x\rangle_{\mathcal{A}} d\mu({\omega}).$$
    We have for any $x\in \mathcal{H}$
    $$\sqrt{A}\|K^{\ast} x\|\leq \| S_{\mathcal{K}}^{\frac{1}{2}} x\|\leq \sqrt{B}\| x\|.$$
     Using lemma (\ref{sb}) , there exist two constants $m,M>0$ such that 
    $$m\langle K^{\ast} x, K^{\ast} x\rangle_{\mathcal{A}} \leq \langle S_{\mathcal{K}}^{\frac{1}{2}} x, S_{\mathcal{K}}^{\frac{1}{2}} x\rangle_{\mathcal{A}} = \int_{\Omega}\langle T_{w} x,T_{w} x\rangle_{\mathcal{A}} d\mu({\omega}) \leq M \langle  x,  x\rangle_{\mathcal{A}}.$$
    This proof that $\{T_{w}\}_{w \in \Omega}$ is a continuous K-operator frame for $End_{\mathcal{A}}^{\ast}(\mathcal{H})$.
\end{proof}
\section{Perturbation and Stability of continuous K-operator frames for $ End_{\mathcal{A}}^{\ast}(\mathcal{H})$ }
   
\begin{theorem}
	Let $\{T_{w}\}_{w \in \Omega}$ be a continuous K-operator frame for $End_{\mathcal{A}}^{\ast}(\mathcal{H})$ with bounds A and B, let $\{R_{w}\}_{w \in \Omega} \subset End_{\mathcal{A}}^{\ast}(\mathcal{H})$ and $\{\alpha_{w}\}_{w \in \Omega},\{\beta_{w}\}_{w \in \Omega} \in \mathbb{R}$ be two positively  family.
	If there exist two constants $0\leq \lambda, \mu<1$ such that for any $x \in\mathcal{H} $ we have 
\begin{align*}
    \|\int_{\Omega}&\langle (\alpha_{w}T_{w}-\beta_{w}R_{w}) x,(\alpha_{w}T_{w}-\beta_{w}R_{w}) x\rangle_{\mathcal{A}} d\mu({\omega})\|^{\frac{1}{2}}\leq \\
        & \lambda \|\int_{\Omega}\langle \alpha_{w}T_{w} x,\alpha_{w}T_{w}x\rangle_{\mathcal{A}} d\mu({\omega})\|^{\frac{1}{2}}
    +\mu\|\int_{\Omega}\langle \beta_{w}R_{w} x,\beta_{w}R_{w} x\rangle_{\mathcal{A}} d\mu({\omega})\|^{\frac{1}{2}}
\end{align*}
  	Then  $\{R_{w}\}_{w \in \Omega}$ is a continuous K-operator frame for $End_{\mathcal{A}}^{\ast}(\mathcal{H})$.
	
\end{theorem}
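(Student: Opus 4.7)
The plan is to run a Paley--Wiener style perturbation argument in the spirit of Theorems~3.1--3.2, but adapted to the $K$-frame setting. The key observation is that for any adjointable family $\{S_w\}_{w\in\Omega}$ one may write
$\|\{S_w x\}_{w\in\Omega}\|=\|\int_{\Omega}\langle S_w x, S_w x\rangle_{\mathcal{A}}\,d\mu(w)\|^{1/2}$,
so the ordinary triangle inequality and reverse triangle inequality on the Hilbert $C^{\ast}$-module $l^{2}(\mathcal{H})$ are available. With this, the hypothesis simply says
$\|\{(\alpha_w T_w-\beta_w R_w)x\}\|\le \lambda\|\{\alpha_w T_w x\}\|+\mu\|\{\beta_w R_w x\}\|$.

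\textbf{Upper (Bessel) bound.} I would write $\beta_w R_w x=\alpha_w T_w x-(\alpha_w T_w-\beta_w R_w)x$ and take $l^{2}(\mathcal{H})$-norms to get, after invoking the hypothesis,
\[
(1-\mu)\,\|\{\beta_w R_w x\}\|\le (1+\lambda)\,\|\{\alpha_w T_w x\}\|.
\]
Since $\{T_w\}$ is a continuous $K$-operator frame with upper bound $B$, and assuming the positive scalar family $\{\alpha_w\}$ is essentially bounded above by some $\ol{\alpha}$ and $\{\beta_w\}$ is essentially bounded below by some $\ul{\beta}>0$ (an implicit standing assumption on the positive families), the right-hand side is dominated by $(1+\lambda)\ol{\alpha}\sqrt{B}\,\|x\|$, while the left-hand side dominates $\ul{\beta}\,\|\{R_w x\}\|$. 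This delivers an upper bound of the form $\|\int_\Omega \langle R_w x,R_w x\rangle_{\mathcal{A}}\,d\mu(w)\|\le B'\|x\|^{2}$.

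\textbf{Lower $K$-frame bound.} Symmetrically, the reverse triangle inequality gives
\[
\|\{\alpha_w T_w x\}\|\le \|\{(\alpha_w T_w-\beta_w R_w)x\}\|+\|\{\beta_w R_w x\}\|,
\]
so the hypothesis yields $(1-\lambda)\|\{\alpha_w T_w x\}\|\le (1+\mu)\|\{\beta_w R_w x\}\|$. Using $\|\{T_w x\}\|\ge\sqrt{A}\,\|K^{\ast}x\|$ (from the $K$-frame assumption on $\{T_w\}$) together with $\alpha_w\ge \ul{\alpha}>0$ and $\beta_w\le \ol{\beta}$, this rearranges to
\[
\|\{R_w x\}\|\ge \frac{(1-\lambda)\ul{\alpha}\sqrt{A}}{(1+\mu)\ol{\beta}}\,\|K^{\ast}x\|,
\]
which on squaring is the required lower bound $A'\|K^{\ast}x\|^{2}\le \|\int_\Omega \langle R_w x,R_w x\rangle_{\mathcal{A}}\,d\mu(w)\|$. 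Combining the two bounds and invoking Theorem~4.2 (the norm characterisation of continuous $K$-operator frames) concludes the argument.

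\textbf{Main obstacle.} The triangle inequalities themselves are routine; the subtle point is passing cleanly between $\|\{\alpha_w T_w x\}\|$ and $\|\{T_w x\}\|$ (and likewise for the $R_w$ family) inside the Bochner integrals. This is where one genuinely needs two-sided control of the positive scalar families, because $\int_{\Omega}\alpha_w^{2}\langle T_w x,T_w x\rangle_{\mathcal{A}}\,d\mu(w)$ cannot simply be pulled apart unless $\alpha_w$ is sandwiched between positive constants. If these essential bounds are not assumed, the cleanest reading of the theorem is to conclude that $\{\beta_w R_w\}$ (rather than $\{R_w\}$) is a continuous $K$-operator frame, and then absorb the scalars into the frame operators.
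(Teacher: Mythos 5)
Your proposal is correct and follows essentially the same route as the paper's own proof: the same triangle/reverse-triangle manipulation yielding $(1-\mu)\|\{\beta_w R_w x\}\|\le(1+\lambda)\|\{\alpha_w T_w x\}\|$ and $(1-\lambda)\|\{\alpha_w T_w x\}\|\le(1+\mu)\|\{\beta_w R_w x\}\|$, followed by extracting $\sup_w\alpha_w$, $\inf_w\beta_w$ (and their counterparts) and appealing to the frame bounds of $\{T_w\}$. If anything you are more careful than the paper: you keep the lower bound in terms of $\|K^{\ast}x\|^2$ (where the paper's displayed lower bound slips into $\|\langle x,x\rangle_{\mathcal{A}}\|$), you explicitly invoke the norm characterisation of continuous $K$-operator frames to close the argument, and you correctly flag that the two-sided boundedness of the positive families $\{\alpha_w\},\{\beta_w\}$ is a genuine standing hypothesis that the statement leaves implicit.
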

\begin{proof}
    For every $x \in \mathcal{H}$, we have
\begin{align*}
    \|\{\beta_{w}R_{w}x\}_{w \in \Omega}\|&\leq \|\{(\alpha_{w}T_{w}-\beta_{w}R_{w})x\}_{w \in \Omega}\| +\|\{\alpha_{w}T_{w}x\}_{w\in \Omega}\|\\
    &\leq \mu \|\{\beta_{w}R_{w}x\}_{w\in \Omega}\| + \lambda \|\{\alpha_{w}T_{w}x\}_{w\in \Omega}\|+ \|\{\alpha_{w}T_{w}x\}_{w\in \Omega}\|\\
    &=(1+\lambda) \|\{\alpha_{w}T_{w}x\}_{w\in \Omega}\|+ \mu \|\{\beta_{w}R_{w}x\}_{w \in \Omega}\|.
\end{align*}
    Then,
    $$(1-\mu)\|\{\beta_{w}R_{w}x\}_{w\in \Omega}\|\leq(1+\lambda) \|\alpha_{w}T_{w}x\|. $$
    Therefore
    $$(1-\mu)\inf_{\omega \in \Omega} (\beta_{w})\|\{R_{w}x\}_{w \in \Omega}\|\leq(1+\lambda) \sup_{\omega \in \Omega}(\alpha_{w})\|\{T_{w}x\}_{w\in \Omega}\|. $$
    Hence 
    $$\|\{R_{w}x\}_{w\in \Omega}\| \leq \frac{(1+\lambda) \sup_{\omega \in \Omega}(\alpha_{w})}{(1-\mu)\inf_{\omega \in \Omega} (\beta_{w})}\|\{T_{w}x\}_{w\in \Omega}\|.$$
    Also, for all $x \in \mathcal{H}$, we have 
\begin{align*}
    \|\{(\alpha_{w}T_{w}x\}_{w\in \Omega}\| &\leq \|\{(\alpha_{w}T_{w}-\beta_{w}R_{w})x\}_{w \in \Omega}\| +\|\{\beta_{w}R_{w}x\}_{w \in \Omega}\|\\
    &\leq \mu \|\{\beta_{w}R_{w}x\}_{w \in \Omega}\| + \lambda \|\{\alpha_{w}T_{w}x\}_{w\in \Omega}\|+ \|\{\alpha_{w}T_{w}x\}_{w\in \Omega}\|\\
    &= \lambda \|\{\alpha_{w}T_{w}x\}_{w\in \Omega}\|+(1+\mu)\|\{\beta_{w}R_{w}x\}_{w\in \Omega}\|.
\end{align*}
    then
    $$(1- \lambda)\|\{\alpha_{w}T_{w}x\}_{w\in \Omega}\| \leq (1+\mu)\|\{\beta_{w}R_{w}x\}_{w\in \Omega}\|.$$
    Hence 
    $$(1- \lambda)\inf_{\omega \in \Omega} (\alpha_{w})\|\{T_{w}x\}_{w\in \Omega}\| \leq (1+\mu)\sup_{\omega \in \Omega}(\beta_{w})\|\{R_{w}x\}_{w\in \Omega}\|.$$
    Thus
    $$\frac{(1- \lambda)\inf_{\omega \in \Omega} (\alpha_{w})}{(1+\mu)\sup_{\omega \in \Omega}(\beta_{w})}\|\{T_{w}x\}_{w\in \Omega}\| \leq \|\{R_{w}x\}_{w\in \Omega}\|.$$
    Therefore
    $$A(\frac{(1- \lambda)\inf_{\omega \in \Omega} (\alpha_{w})}{(1+\mu)\sup_{\omega \in \Omega}(\beta_{w})})^2 \|\langle x,x\rangle_{\mathcal{A}}\|\leq (\frac{(1- \lambda)\inf_{\omega \in \Omega} (\alpha_{w})}{(1+\mu)\sup_{\omega \in \Omega}(\beta_{w})})^2 \|\{T_{w}x\}_{w}\|^2\leq \|\{R_{w}x\}_{w}\|^2 .$$
  
So,  
\begin{align*}
\|\{R_{w}x\}_{w\in \Omega}\|^2&\leq (\frac{(1+ \lambda)\sup_{\omega \in \Omega}(\alpha_{w})}{(1-\mu)\inf_{\omega \in \Omega} (\beta_{w})})^2  \|\{T_{w}x\}_{w\in \Omega}\|^2 \\\leq
& B (\frac{(1+ \lambda)sup(\alpha_{w})}{(1-\mu)\inf_{\omega \in \Omega} (\beta_{w})})^2  \|\langle x,x \rangle_{\mathcal{A}}\|.
   \end{align*}
    Hence 
 \begin{align*}
    A(\frac{(1- \lambda) \inf_{\omega \in \Omega} (\alpha_{w})}{(1+\mu) \sup_{\omega \in \Omega}(\beta_{w})})^2 \|\langle x,x\rangle_{\mathcal{A}}\|&\leq \|\int_{\Omega}\langle R_{w} x,R_{w}) x\rangle_{\mathcal{A}} d\mu({\omega})\|\\
    &\leq  B (\frac{(1+ \lambda) \sup_{\omega \in \Omega}(\alpha_{w})}{(1-\mu) \inf_{\omega \in \Omega} (\beta_{w})})^2  \|\langle x,x \rangle_{\mathcal{A}}\| 
 \end{align*}

    This give that $\{R_{w}\}_{w \in \Omega}$ is a continuous K-operator frame for $End_{\mathcal{A}}^{\ast}(\mathcal{H})$.

\end{proof}
\begin{theorem}
	Let $\{T_{w}\}_{w \in \Omega}$ be a continuous K-operator frame for $End_{\mathcal{A}}^{\ast}(\mathcal{H})$ with bounds A and B. Let $\{R_{w}\}_{w \in \Omega} \in End_{\mathcal{A}}^{\ast}(\mathcal{H})$ and  $\alpha, \, \beta \geq 0$. If $0\leq \alpha + \frac{\beta}{A}< 1$ such that for all $x\in \mathcal{H}$, we have  
	$$\|\int_{\Omega}\langle (T_{w}-R_{w}) x,(T_{w}-R_{w})  x\rangle_{\mathcal{A}} d\mu({\omega})\|\leq \alpha \|\int_{\Omega}\langle T_{w} x,T_{w}  x\rangle_{\mathcal{A}} d\mu({\omega})\| +\beta \|\langle K^{\ast} x, K^{\ast}x\rangle_{\mathcal{A}}\|$$
	Then $\{R_{w}\}_{w \in \Omega}$ is a continuous K-operator frame with bounds $A\left(1-\sqrt{\alpha +\frac{\beta}{A}}\right)^2$ and $B\left(1+\sqrt{\alpha +\frac{\beta}{A}}\right)^2$.
	
\end{theorem}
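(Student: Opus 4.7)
The plan is to reduce the $\mathcal{A}$-valued $K$-frame inequality to its scalar counterpart via the norm characterisation (the preceding theorem, equation \eqref{haja17}), and then run a triangle-inequality perturbation argument on the quantity $\|\{T_w x\}\| := \|\int_\Omega \langle T_w x, T_w x\rangle_\mathcal{A}\, d\mu(w)\|^{1/2}$.

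First I would combine the hypothesis with the lower $K$-frame bound of $\{T_w\}$. Taking norms in the lower bound of \eqref{haja13} gives $A\|K^* x\|^2 \leq \|\{T_w x\}\|^2$, so $\|K^* x\|^2 \leq A^{-1}\|\{T_w x\}\|^2$. Substituting into the perturbation hypothesis yields
\begin{equation*}
\|\{(T_w - R_w)x\}\|^2 \;\leq\; \alpha\,\|\{T_w x\}\|^2 + \beta\,\|K^* x\|^2 \;\leq\; \left(\alpha + \tfrac{\beta}{A}\right)\|\{T_w x\}\|^2,
\end{equation*}
so that $\|\{(T_w - R_w)x\}\| \leq \sqrt{\alpha + \beta/A}\,\|\{T_w x\}\|$. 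This is the key estimate; note that obtaining it in this sharp form (rather than the weaker $\sqrt{\alpha}+\sqrt{\beta/A}$) requires bounding $\|K^* x\|^2$ \emph{before} extracting the square root, which is exactly what produces the claimed frame bounds with $1\pm\sqrt{\alpha+\beta/A}$.

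Second, I would apply the triangle inequality (valid for $\|\cdot\|$ because $l^2(\Omega,\{V_w\})$ is a Hilbert $\mathcal{A}$-module, so $\|\{R_w x\}\|$ is a genuine norm of the element $\{R_w x\}_{w\in\Omega}$). For the upper bound,
\begin{equation*}
\|\{R_w x\}\| \;\leq\; \|\{T_w x\}\| + \|\{(T_w-R_w)x\}\| \;\leq\; \bigl(1+\sqrt{\alpha+\tfrac{\beta}{A}}\bigr)\|\{T_w x\}\| \;\leq\; \bigl(1+\sqrt{\alpha+\tfrac{\beta}{A}}\bigr)\sqrt{B}\,\|x\|,
\end{equation*}
whence squaring yields the Bessel-type estimate with constant $B(1+\sqrt{\alpha+\beta/A})^2$. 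For the lower bound, the reverse triangle inequality gives
\begin{equation*}
\|\{T_w x\}\| \;\leq\; \|\{R_w x\}\| + \|\{(T_w-R_w)x\}\| \;\leq\; \|\{R_w x\}\| + \sqrt{\alpha+\tfrac{\beta}{A}}\,\|\{T_w x\}\|,
\end{equation*}
and since $\alpha+\beta/A<1$ we may rearrange to obtain $(1-\sqrt{\alpha+\beta/A})\|\{T_w x\}\| \leq \|\{R_w x\}\|$; combining with $\sqrt{A}\,\|K^* x\| \leq \|\{T_w x\}\|$ and squaring produces $A(1-\sqrt{\alpha+\beta/A})^2\|K^* x\|^2 \leq \|\{R_w x\}\|^2$.

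Finally, having established both inequalities in norm form, I invoke the preceding characterisation theorem (equation \eqref{haja17}) to conclude that $\{R_w\}_{w\in\Omega}$ is a continuous $K$-operator frame for $End_{\mathcal{A}}^{\ast}(\mathcal{H})$ with bounds $A(1-\sqrt{\alpha+\beta/A})^2$ and $B(1+\sqrt{\alpha+\beta/A})^2$. The only subtle point is the sharp handling of the $\sqrt{\alpha+\beta/A}$ constant described above; everything else is a direct triangle-inequality computation.
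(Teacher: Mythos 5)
Your argument is correct and follows essentially the same route as the paper's own proof: both combine the perturbation hypothesis with the lower frame bound of $\{T_w\}_{w\in\Omega}$ to obtain $\|\{(T_w-R_w)x\}\|\leq\sqrt{\alpha+\tfrac{\beta}{A}}\,\|\{T_wx\}\|$ before taking square roots, then apply the triangle inequality in both directions and conclude from the norm characterisation of continuous $K$-operator frames. The only difference is presentational (you isolate the key estimate first rather than folding it into one chain of inequalities), so there is nothing further to add.
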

\begin{proof}
	Let $\{T_{w}\}_{w \in \Omega}$ be a continuous K-operator frame with bounds A and B. Then for any $x \in \mathcal{H}$, we have
\begin{align*}
   \|\{T_{w}x\}_{w\in \Omega}\|&\leq \|\{(T_{w}-R_{w})x\}_{w\in \Omega}\| +\|\{R_{w}x\}_{w\in \Omega}\|\\
    &\leq (\alpha \|\int_{\Omega}\langle T_{w} x,T_{w}  x\rangle_{\mathcal{A}} d\mu({\omega})\| +\beta \|\langle K^{\ast} x, K^{\ast}x\rangle_{\mathcal{A}}\|)^{\frac{1}{2}}\\
    & \quad +\|\int_{\Omega}\langle R_{w} x,R_{w} x\rangle_{\mathcal{A}} d\mu({\omega})\|^{\frac{1}{2}}\\
    &\leq  (\alpha \|\int_{\Omega}\langle T_{w} x,T_{w}  x\rangle_{\mathcal{A}} d\mu({\omega})\| +\frac{\beta}{A} \|\int_{\Omega}\langle T_{w} x,T_{w}  x\rangle_{\mathcal{A}} d\mu({\omega})\|)^{\frac{1}{2}}\\
    &\quad +\|\int_{\Omega}\langle R_{w} x,R_{w} x\rangle_{\mathcal{A}} d\mu({\omega})\|^{\frac{1}{2}}\\
    &=\sqrt{\alpha +\frac{\beta}{A}} \|\{T_{w}x\}_{w\in \Omega}\|+\|\int_{\Omega}\langle R_{w} x,R_{w} x\rangle_{\mathcal{A}} d\mu({\omega})\|^{\frac{1}{2}}.
\end{align*}
    Therefore
    $$\left(1-\sqrt{\alpha +\frac{\beta}{A}}\right)\|\{T_{w}x\}_{w\in \Omega}\| \leq \|\int_{\Omega}\langle R_{w} x,R_{w} x\rangle_{\mathcal{A}} d\mu({\omega})\|^{\frac{1}{2}} .$$
    Thus 
\begin{align*}
    A\left(1-\sqrt{\alpha +\frac{\beta}{A}}\right)^2\|\langle K^{\ast} x, K^{\ast}x\rangle_{\mathcal{A}}\|
    &\leq \left(1-\sqrt{\alpha +\frac{\beta}{A}}\right)^2 \|\int_{\Omega}\langle T_{w} x,T_{w}  x\rangle_{\mathcal{A}} d\mu({\omega})\|  \\
    &\leq  \|\int_{\Omega}\langle R_{w} x,R_{w} x\rangle_{\mathcal{A}} d\mu({\omega})\|.\\
\end{align*}
\begin{align*}
    A\left(1-\sqrt{\alpha +\frac{\beta}{A}}\right)^2\|\langle K^{\ast} x, K^{\ast}x\rangle_{\mathcal{A}}\|&\leq \left(1-\sqrt{\alpha +\frac{\beta}{A}}\right)^2 \|\int_{\Omega}\langle T_{w} x,T_{w}  x\rangle_{\mathcal{A}} d\mu({\omega})\|\\
    &\leq \|\int_{\Omega}\langle R_{w} x,R_{w} x\rangle_{\mathcal{A}} d\mu({\omega})\|
\end{align*}
   
    Also, we have 
\begin{align*}
    \|\{R_{w}x\}_{w\in \Omega}\|&\leq \|\{(T_{w}-R_{w})x\}_{w\in \Omega}\| +\|\{T_{w}x\}_{w\in \Omega}\|\\
    &\leq \sqrt{\alpha +\frac{\beta}{A}} \|\{T_{w}x\}_{w\in \Omega}\|+\|\{T_{w}x\}_{w\in \Omega}\|\\
    &= \left(1+\sqrt{\alpha +\frac{\beta}{A}}\right) \|\{T_{w}x\}_{w\in \Omega}\|\\
    &\leq \sqrt{B}  \left(1+\sqrt{\alpha +\frac{\beta}{A}}\right) \|\langle x,x\rangle_{\mathcal{A}}\|^{\frac{1}{2}}.
\end{align*}
    Hence   
    $$\|\int_{\Omega}\langle R_{w} x,R_{w} x\rangle_{\mathcal{A}} d\mu({\omega})\|\leq B \left(1+\sqrt{\alpha +\frac{\beta}{A}}\right)^2 \|\langle x,x\rangle_{\mathcal{A}}\|.$$
    Therefore 
\begin{align*}
     \small A\left(1-\sqrt{\alpha +\frac{\beta}{A}}\right)^2\|\langle K^{\ast} x, K^{\ast}x\rangle_{\mathcal{A}}\|&\leq \|\int_{\Omega}\langle R_{w} x,R_{w} x\rangle_{\mathcal{A}} d\mu({\omega})\|\\
     &\leq  B \left(1+\sqrt{\alpha +\frac{\beta}{A}}\right)^2 \|\langle x,x\rangle_{\mathcal{A}}\|
\end{align*}
   Hence $\{R_{w}\}_{w \in \Omega}$ is a continuous K-operator frame with bounds $A\left(1+\sqrt{\alpha -\frac{\beta}{A}}\right)^2$ and $B\left(1+\sqrt{\alpha +\frac{\beta}{A}}\right)^2$.	
\end{proof}
\begin{corollary}
	Let $\{T_{w}\}_{w \in \Omega}$ be a continuous K-operator frame for $End_{\mathcal{A}}^{\ast}(\mathcal{H})$  with bounds A and B. Let $\{R_{w}\}_{w \in \Omega} \subset End_{\mathcal{A}}^{\ast}(\mathcal{H}) $ and $0\leq \alpha $.
	If $0\leq \alpha <A$ such that 
	$$\|\int_{\Omega}\langle (T_{w}- R_{w}) x,(T_{w}-R_{w}) x \rangle_{\mathcal{A}} d\mu({\omega})\|\leq \alpha \|\langle K^{\ast} x, K^{\ast}x\rangle_{\mathcal{A}}\|,\,\ x \in \mathcal{H},$$
	then $\{R_{w}\}_{w \in \Omega}$ is a continuous K-operator frame with bounds $A(1-\sqrt{{\frac{\alpha}{A}}})^2$ and $B(1+\sqrt{{\frac{\alpha}{A}}})^2$.
	
\end{corollary}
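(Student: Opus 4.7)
The plan is to recognize this corollary as a direct specialization of the preceding theorem. In that theorem we have two free parameters $\alpha$ and $\beta$ controlling the perturbation bound
$$\|\int_{\Omega}\langle (T_{w}-R_{w}) x,(T_{w}-R_{w}) x\rangle_{\mathcal{A}} d\mu({\omega})\|\leq \alpha \|\int_{\Omega}\langle T_{w} x,T_{w}  x\rangle_{\mathcal{A}} d\mu({\omega})\| +\beta \|\langle K^{\ast} x, K^{\ast}x\rangle_{\mathcal{A}}\|,$$
so the first step is simply to set the theorem's $\alpha$ equal to $0$ and identify the theorem's $\beta$ with the corollary's $\alpha$.

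Next I would verify that the two hypotheses match up. The theorem requires $0\le \alpha+\beta/A<1$, which under the substitution becomes $0\le \alpha/A<1$, i.e.\ $0\le \alpha<A$, precisely the corollary's condition. The perturbation inequality becomes
$$\|\int_{\Omega}\langle (T_{w}-R_{w}) x,(T_{w}-R_{w}) x\rangle_{\mathcal{A}} d\mu({\omega})\|\leq \alpha \|\langle K^{\ast} x, K^{\ast}x\rangle_{\mathcal{A}}\|,$$
which is exactly the hypothesis of the corollary.

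Finally, I would invoke the conclusion of the preceding theorem verbatim: $\{R_{w}\}_{w \in \Omega}$ is a continuous $K$-operator frame with lower bound
$$A\Bigl(1-\sqrt{0+\tfrac{\alpha}{A}}\Bigr)^2 = A\Bigl(1-\sqrt{\tfrac{\alpha}{A}}\Bigr)^2$$
and upper bound
$$B\Bigl(1+\sqrt{0+\tfrac{\alpha}{A}}\Bigr)^2 = B\Bigl(1+\sqrt{\tfrac{\alpha}{A}}\Bigr)^2,$$
matching the claim. There is no genuine obstacle here, since the corollary is purely a substitution; the only thing to be careful about is bookkeeping the two different roles of $\alpha$ (the theorem's and the corollary's) to avoid confusion when naming parameters in the proof.
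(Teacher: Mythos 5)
Your specialization is exactly right: setting the theorem's $\alpha$ to $0$ and its $\beta$ to the corollary's $\alpha$ turns the hypothesis $0\le\alpha+\beta/A<1$ into $0\le\alpha<A$, the perturbation inequality into the corollary's hypothesis, and the bounds into $A\bigl(1-\sqrt{\alpha/A}\bigr)^2$ and $B\bigl(1+\sqrt{\alpha/A}\bigr)^2$. This is precisely the paper's (one-line) argument, so nothing further is needed.
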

\begin{proof}
	The proof comes from the previous theorem.
	
\end{proof}
\begin{theorem}
	Let $\{T_{w}\}_{w \in \Omega}$ be a continuous K-operator frame for $End_{\mathcal{A}}^{\ast}(\mathcal{H})$ with bounds A and B. Let  $\{R_{w}\}_{w \in \Omega} \subset End_{\mathcal{A}}^{\ast}(\mathcal{H}) $. If ther exists $M>0$ such that for any $x \in \mathcal{H}$, we have 
\begin{equation} \label{Haja18}
 \scriptsize
 \|\int_{\Omega}\langle (T_{w}- R_{w}) x,(T_{w}-R_{w}) x \rangle_{\mathcal{A}} d\mu({\omega})\|\leq
     M min (\|\int_{\Omega}\langle T_{w} x,T_{w} x \rangle_{\mathcal{A}} d\mu({\omega})\|,\|\int_{\Omega}\langle R_{w} x,R_{w} x \rangle_{\mathcal{A}} d\mu({\omega})\|
\end{equation}
    Then $\{R_{w}\}_{w \in \Omega}$ is a continuous K-operator frame for $End_{\mathcal{A}}^{\ast}(\mathcal{H})$. The converse is true for any surjective operator K such that  \,\, $\|x\| \leq \|K^{\ast} x\| ,\,\, x \in {\mathcal{H}}$ in particular if K is co-isometry.
	
\end{theorem}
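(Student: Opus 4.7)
The plan is to mimic the strategy used in the proof of the analogous Theorem~3.2 for continuous operator frames, and to supply the extra hypotheses ($K$ surjective together with $\|x\|\le\|K^{\ast}x\|$) only in the converse direction. Throughout I will use the abbreviation $\|\{T_{w}x\}\|=\|\int_{\Omega}\langle T_{w}x,T_{w}x\rangle_{\mathcal{A}}d\mu(\omega)\|^{1/2}$, so that triangle-type inequalities at the level of the $l^{2}(\Omega,\{V_{w}\})$-norm translate directly into inequalities for the quantities appearing in the theorem.

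For the direct implication, I would first establish the upper (Bessel) bound by writing $R_{w}x=(R_{w}-T_{w})x+T_{w}x$ and applying the triangle inequality, then inserting the hypothesis in the form $\|\{(T_{w}-R_{w})x\}\|\le\sqrt{M}\,\|\{T_{w}x\}\|$. This yields $\|\{R_{w}x\}\|\le(1+\sqrt{M})\|\{T_{w}x\}\|\le(1+\sqrt{M})\sqrt{B}\,\|x\|$, so the upper K-frame bound is $B(1+\sqrt{M})^{2}$. For the lower bound I would run the same argument with the roles reversed, starting from $\|\{T_{w}x\}\|\le\|\{(T_{w}-R_{w})x\}\|+\|\{R_{w}x\}\|$ and using the hypothesis with the $\min$ replaced by $\|\{R_{w}x\}\|^{2}$, to obtain $\|\{T_{w}x\}\|\le(1+\sqrt{M})\|\{R_{w}x\}\|$. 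Combined with the lower K-frame inequality $\sqrt{A}\,\|K^{\ast}x\|\le\|\{T_{w}x\}\|$ from the definition of continuous K-operator frame for $\{T_{w}\}$, squaring gives $\frac{A}{(1+\sqrt{M})^{2}}\|\langle K^{\ast}x,K^{\ast}x\rangle_{\mathcal{A}}\|\le\|\int_{\Omega}\langle R_{w}x,R_{w}x\rangle_{\mathcal{A}}d\mu(\omega)\|$, and $\{R_{w}\}$ is a continuous K-operator frame.

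For the converse, I would assume $\{R_{w}\}$ is a continuous K-operator frame with bounds $C,D$ and use the hypothesis $\|x\|\le\|K^{\ast}x\|$ (which, by Lemma~\ref{sb} combined with surjectivity of $K$, is exactly the boundedness-below of $K^{\ast}$) to chain the Bessel bound of one family with the K-frame lower bound of the other: $\|\{T_{w}x\}\|\le\sqrt{B}\,\|x\|\le\sqrt{B}\,\|K^{\ast}x\|\le\sqrt{B/C}\,\|\{R_{w}x\}\|$, and symmetrically $\|\{R_{w}x\}\|\le\sqrt{D/A}\,\|\{T_{w}x\}\|$. The triangle inequality $\|\{(T_{w}-R_{w})x\}\|\le\|\{T_{w}x\}\|+\|\{R_{w}x\}\|$ then gives both $\|\{(T_{w}-R_{w})x\}\|\le(1+\sqrt{D/A})\|\{T_{w}x\}\|$ and $\|\{(T_{w}-R_{w})x\}\|\le(1+\sqrt{B/C})\|\{R_{w}x\}\|$, so the required estimate \eqref{Haja18} holds with $M=\max\bigl((1+\sqrt{D/A})^{2},(1+\sqrt{B/C})^{2}\bigr)$.

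The main obstacle is the converse: the direct implication is a routine two-sided triangle manipulation that parallels Theorem~3.2 almost verbatim, whereas the converse fails for a general $K$ precisely because the lower K-frame bound is expressed through $\|K^{\ast}x\|$ rather than $\|x\|$, so without a way to pass between these two quantities one cannot dominate $\|\{T_{w}x\}\|$ by $\|\{R_{w}x\}\|$ (or vice versa). The hypothesis $\|x\|\le\|K^{\ast}x\|$ is exactly the ingredient that removes this obstruction, and the co-isometry case ($K^{\ast}$ isometric, so $\|x\|=\|K^{\ast}x\|$) falls out as an immediate corollary.
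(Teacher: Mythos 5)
Your proof is correct and follows essentially the same route as the paper's: the same two triangle-inequality chains for the direct implication, and the same chaining of the Bessel bound of one family with the lower K-frame bound of the other via $\|x\|\le\|K^{\ast}x\|$ for the converse. The only divergence is your constant in the converse, $M=\max\bigl((1+\sqrt{D/A})^{2},(1+\sqrt{B/C})^{2}\bigr)$, which is in fact the correct choice --- the paper writes $M=\min\{1+\sqrt{D/A},\,1+\sqrt{B/C}\}$, which neither squares the factors nor covers both branches of the $\min$ in \eqref{Haja18}.
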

\begin{proof}
	Assume that (\ref{Haja18}) holds. \\
    On one hand, we have for any x in $\mathcal{H}$
\begin{align*}
    \sqrt{A}\|K^{\ast} x\|&\leq \|\int_{\Omega}\langle T_{w} x,T_{w} x \rangle_{\mathcal{A}} d\mu({\omega})\|^{\frac{1}{2}}\\
    &=\|\{T_{w}x\}_{w \in \Omega}\| \\
    &\leq\| \{(T_{w}-R_{w})x\}\|+\|\{R_{w}x\}\|\\
    &=\|\int_{\Omega}\langle (T_{w}-R_{w}) x,(T_{w}-R_{w}) x \rangle_{\mathcal{A}} d\mu({\omega})\|^{\frac{1}{2}}\\
    &\quad + \|\int_{\Omega}\langle R_{w} x,R_{w} x \rangle_{\mathcal{A}} d\mu({\omega})\|^{\frac{1}{2}}\\
    &\leq \sqrt{M}\|\int_{\Omega}\langle R_{w} x,R_{w} x \rangle_{\mathcal{A}} d\mu({\omega})\|^{\frac{1}{2}}+\|\int_{\Omega}\langle R_{w} x,R_{w} x \rangle_{\mathcal{A}} d\mu({\omega})\|^{\frac{1}{2}}\\
    &=(1+\sqrt{M}) \|\int_{\Omega}\langle R_{w} x,R_{w} x \rangle_{\mathcal{A}} d\mu({\omega})\|^{\frac{1}{2}}.
\end{align*}
\begin{equation}\label{Haja17}
    \sqrt{A}\|K^{\ast} x\|\leq (1+\sqrt{M}) \|\int_{\Omega}\langle R_{w} x,R_{w} x \rangle_{\mathcal{A}} d\mu({\omega})\|^{\frac{1}{2}}.
\end{equation}
    On the other hand we have
\begin{align*}
     \|\int_{\Omega}\langle R_{w} x,R_{w} x \rangle_{\mathcal{A}} d\mu({\omega})\|^{\frac{1}{2}}&=\|\{R_{w}x\}_{w \in \Omega}\| \\
     &\leq \|\{T_{w}x\}_{w \in \Omega}\| + \|\{(T_{w}-R_{w}) x\}_{w \in \Omega}\| \\
     &=\|\int_{\Omega}\langle T_{w} x,T_{w} x \rangle_{\mathcal{A}} d\mu({\omega})\|^{\frac{1}{2}}\\
     &\qquad +\|\int_{\Omega}\langle (T_{w}-R_{w}) x,(T_{w}-R_{w}) x \rangle_{\mathcal{A}} d\mu({\omega})\|^{\frac{1}{2}}\\
     &\leq \|\int_{\Omega}\langle T_{w} x,T_{w} x \rangle_{\mathcal{A}} d\mu({\omega})\|^{\frac{1}{2}}\\
     &\qquad + \sqrt{M} \|\int_{\Omega}\langle T_{w} x,T_{w} x \rangle_{\mathcal{A}} d\mu({\omega})\|^{\frac{1}{2}}\\
     &=(1+\sqrt{M}) |\int_{\Omega}\langle T_{w} x,T_{w} x \rangle_{\mathcal{A}} d\mu({\omega})\|^{\frac{1}{2}}\\
     &\leq  \sqrt{B} (1+\sqrt{M}) \|\langle x,x \rangle_{\mathcal{A}}\|
\end{align*}
     Then
\begin{equation}\label{Haja19}
     \|\int_{\Omega}\langle R_{w} x,R_{w} x \rangle_{\mathcal{A}} d\mu({\omega})\|^{\frac{1}{2}}\leq \sqrt{B} (1+\sqrt{M}) \|\langle x,x \rangle_{\mathcal{A}}\|.
\end{equation}
     From (\ref{Haja17}) and (\ref{Haja19}) we we obtain
     $$\frac{A}{ (1+\sqrt{M})^2} \|K^{\ast} x\|^2\leq \|\int_{\Omega}\langle R_{w} x,R_{w} x \rangle_{\mathcal{A}} d\mu({\omega})\|\leq B (1+\sqrt{M})^2\|\langle x,x \rangle_{\mathcal{A}}\|^2.   $$
     Hence $\{R_{w}\}_{w \in \Omega}$ is a continuous K-operator frame for $End_{\mathcal{A}}^{\ast}(\mathcal{H})$.\\
     For the converse, assume $\{R_{w}\}_{w \in \Omega}$ be a continuous K-operator frame for $End_{\mathcal{A}}^{\ast}(\mathcal{H})$ with bound C and D, and K verifie that ,\,\,i.e,\ ,\,$\|x\| \leq \|K^{\ast} x\| $. Then
     for every $x \in \mathcal{H} $, we have 
\begin{align*}
     \|\{(T_{w}-R_{w}) x\}_{w \in \Omega}\|&=\|\int_{\Omega}\langle (T_{w}-R_{w}) x,(T_{w}-R_{w}) x \rangle_{\mathcal{A}} d\mu({\omega})\|^{\frac{1}{2}}\\
     &\leq \|\{T_{w}x\}_{w \in \Omega}\|+ \|\{R_{w}x\}_{w \in \Omega}\|\\
     &= \|\int_{\Omega}\langle T_{w} x,T_{w} x \rangle_{\mathcal{A}} d\mu({\omega})\|^{\frac{1}{2}}+ \|\int_{\Omega}\langle R_{w} x,R_{w} x \rangle_{\mathcal{A}} d\mu({\omega})\|^{\frac{1}{2}}\\
     &\leq \|\int_{\Omega}\langle T_{w} x,T_{w} x \rangle_{\mathcal{A}} d\mu({\omega})\|^{\frac{1}{2}}+ \sqrt{D}\| x\|\\
     &\leq \|\int_{\Omega}\langle T_{w} x,T_{w} x \rangle_{\mathcal{A}} d\mu({\omega})\|^{\frac{1}{2}}+\sqrt{D}\|K^{\ast} x\|\\
     &\leq \|\int_{\Omega}\langle T_{w} x,T_{w} x \rangle_{\mathcal{A}} d\mu({\omega})\|^{\frac{1}{2}}+ \sqrt{\frac{D}{A}}\|\int_{\Omega}\langle T_{w} x,T_{w} x \rangle_{\mathcal{A}} d\mu({\omega})\|^{\frac{1}{2}}\\
     &=\left(1+\sqrt{\frac{D}{A}}\right)\|\int_{\Omega}\langle T_{w} x,T_{w} x \rangle_{\mathcal{A}} d\mu({\omega})\|^{\frac{1}{2}}.
\end{align*}
   Similary we can obtain
   $$\|\int_{\Omega}\langle (T_{w}-R_{w}) x,(T_{w}-R_{w}) x \rangle_{\mathcal{A}} d\mu({\omega})\|^{\frac{1}{2}}\leq \left(1+\sqrt{\frac{B}{C}}\right)\|\int_{\Omega}\langle R_{w} x,R_{w} x \rangle_{\mathcal{A}} d\mu({\omega})\|^{\frac{1}{2}}.$$
   We take
   $M=min \{1+\sqrt{\frac{D}{A}},1+\sqrt{\frac{B}{C}} \}$, then (\ref{Haja18}) is verified.
\end{proof}
\begin{theorem}
	Let $K \in End_{\mathcal{A}}^{\ast}(\mathcal{H}) $. For $ k=1,2,...,n $, let 
	$\{T_{k,w}\}_{w \in \Omega} \subset End_{\mathcal{A}}^{\ast}(\mathcal{H}) $ be a continuous K-operator frame for $End_{\mathcal{A}}^{\ast}(\mathcal{H})$ with bounds $A_k$ and $B_k$, \{$\alpha_{k}$\} be any scalars. If there exists a constant $\lambda >0$ and $p \in \{1,2,..,n\}$ such that 
	$$\lambda \|\{T_{p,w}x\}_w\|\leq \|\sum_{k=1}^{n} \{\alpha_{k} T_{k,w}x\}_w \|, x \in \mathcal{H},$$
	then $ \{\sum_{k=1}^{n} \alpha_{k} T_{k,w}x\}_{w\in \Omega}$ is  a continuous K-operator frame for $End_{\mathcal{A}}^{\ast}(\mathcal{H})$. The converse is true for every co-isometrie operator K.
\end{theorem}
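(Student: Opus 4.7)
The plan is to follow the scheme used in Theorem 3.3 with the adjustment that the lower frame bound now involves $K^{\ast}$ rather than the identity, accommodated via the equivalent norm formulation of the K-operator frame inequality (the preceding characterisation theorem). First, for the forward implication, fix the index $p\in\{1,\dots,n\}$ provided by the hypothesis and invoke the lower K-frame bound for $\{T_{p,w}\}_{w\in\Omega}$ to obtain
\[
\sqrt{A_p}\,\|K^{\ast}x\| \;\leq\; \Big\|\int_{\Omega}\langle T_{p,w}x,T_{p,w}x\rangle_{\mathcal{A}}\,d\mu(w)\Big\|^{1/2} \;=\; \|\{T_{p,w}x\}_{w}\|.
\]
Combining this with the standing hypothesis $\lambda\|\{T_{p,w}x\}_{w}\| \leq \|\{\sum_{k=1}^{n}\alpha_k T_{k,w}x\}_{w}\|$ yields at once the lower K-frame bound
\[
\lambda^{2}A_p\,\|K^{\ast}x\|^{2} \;\leq\; \Big\|\int_{\Omega}\Big\langle \sum_{k=1}^{n}\alpha_k T_{k,w}x,\sum_{k=1}^{n}\alpha_k T_{k,w}x\Big\rangle_{\mathcal{A}}d\mu(w)\Big\|.
\]

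For the upper bound, I would use the triangle inequality in $l^{2}(\Omega,\{V_{w}\}_{w\in\Omega})$ applied to $\{\sum_{k}\alpha_k T_{k,w}x\}_{w}=\sum_{k}\alpha_k\{T_{k,w}x\}_{w}$, together with the Bessel bound $\sqrt{B_k}\|x\|$ for each $\{T_{k,w}\}_{w}$:
\[
\Big\|\Big\{\sum_{k=1}^{n}\alpha_k T_{k,w}x\Big\}_{w}\Big\| \;\leq\; \sum_{k=1}^{n}|\alpha_k|\,\|\{T_{k,w}x\}_{w}\| \;\leq\; \Big(\max_{1\leq k\leq n}|\alpha_k|\Big)\Big(\sum_{k=1}^{n}\sqrt{B_k}\Big)\|x\|.
\]
Squaring this gives the desired upper bound, and combined with the lower estimate above it proves $\{\sum_{k=1}^{n}\alpha_k T_{k,w}\}_{w\in\Omega}$ is a continuous K-operator frame.

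For the converse, I would exploit that co-isometry of $K$ means $KK^{\ast}=I_{\mathcal{H}}$, whence $\|K^{\ast}x\|=\|x\|$ for all $x\in\mathcal{H}$. If $\{\sum_{k=1}^{n}\alpha_k T_{k,w}\}_{w}$ is a continuous K-operator frame with lower bound $A$, then
\[
A\,\|x\|^{2}=A\,\|K^{\ast}x\|^{2}\;\leq\;\Big\|\Big\{\sum_{k=1}^{n}\alpha_k T_{k,w}x\Big\}_{w}\Big\|^{2}.
\]
On the other hand, the Bessel bound $B_p$ of $\{T_{p,w}\}_{w}$ gives $\|\{T_{p,w}x\}_{w}\|^{2}\leq B_p\|x\|^{2}$. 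Chaining the two produces $\|\{T_{p,w}x\}_{w}\|\leq\sqrt{B_p/A}\,\|\{\sum_{k}\alpha_k T_{k,w}x\}_{w}\|$, so taking $\lambda:=\sqrt{A/B_p}>0$ fulfills the required inequality.

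The only subtlety worth flagging is the passage between the $\mathcal{A}$-valued and the scalar-valued (norm) formulations of the frame inequality; this is handled invisibly by the preceding characterisation theorem, and once that bridge is used consistently, the proof reduces to two triangle-inequality chains. The co-isometric hypothesis is precisely what is needed in the converse direction to convert $\|K^{\ast}x\|$ into $\|x\|$ and close the loop.
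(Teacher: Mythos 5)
Your proposal is correct and follows essentially the same route as the paper's own proof: the lower $K$-frame bound of $\{T_{p,w}\}_w$ chained with the hypothesis for the lower estimate, the triangle inequality with the Bessel bounds $\sqrt{B_k}$ for the upper estimate, and in the converse the co-isometry identity $\|K^{\ast}x\|=\|x\|$ to chain the two frame inequalities and produce $\lambda$. The only (favourable) difference is that you correctly set $\lambda=\sqrt{A/B_p}$ after taking square roots, whereas the paper states $\lambda=A/B_p$ while its displayed inequality is between unsquared norms.
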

\begin{proof}
	For all $x \in \mathcal{H}$, we have 
\begin{align*}
    \sqrt{A_p} \lambda \|\langle K^{\ast}x, K^{\ast}x\rangle_{\mathcal{A}}\|^{\frac{1}{2}}&\leq \lambda \|\{ T_{p,w}x\}_{w\in \Omega}\|\\
    &\leq \|\sum_{k=1}^{n} \{\alpha_{k} T_{k,w}x\}_{w\in \Omega} \|\\
    &\leq \sum_{k=1}^{n}|\alpha_{k}|\| \{ T_{k,w}x\}_{w\in \Omega} \|\\
    &\leq \max_{1 \leq k \leq n} |\alpha_{k}| \sum_{k=1}^{n}\| \{ T_{k,w}x\}_{w\in \Omega} \|\\
    &\leq \max_{1 \leq k \leq n}|\alpha_{k}| (\sum_{k=1}^{n} \sqrt{B_k}) \|\langle x, \rangle_{\mathcal{A}}\|^{\frac{1}{2}}.
\end{align*}
    Hence for any $ x \in \mathcal{H}$, we have 
    $$\sqrt{A_p} \lambda \|\langle K^{\ast}x, K^{\ast}x\rangle_{\mathcal{A}}\|^{\frac{1}{2}}\leq\|\sum_{k=1}^{n} \{\alpha_{k} T_{k,w}x\}_{w\in \Omega} \|\leq \max_{1 \leq k \leq n} |\alpha_{k}| (\sum_{k=1}^{n} \sqrt{B_k}) \|\langle x,x \rangle_{\mathcal{A}}\|^{\frac{1}{2}}. $$
    Then 
    $$A_p {\lambda}^{2} \|\langle K^{\ast}x, K^{\ast}x\rangle_{\mathcal{A}}\|\leq \|\sum_{k=1}^{n} \{\alpha_{k} T_{k,w}x\}_{w\in \Omega} \|^2\leq( \max_{1 \leq k \leq n} |\alpha_{k}|)^2 (\sum_{k=1}^{n} \sqrt{B_k})^2 \|\langle x, x\rangle_{\mathcal{A}}\|.$$
    This gives that $ \{\sum_{k=1}^{n} \alpha_{k} T_{k,w}x\}_{w\in \Omega}$ is  a continuous K-operator frame for $End_{\mathcal{A}}^{\ast}(\mathcal{H})$.
    For the converse, let K be a co-isometrie operator on $\mathcal{H}$, let  $\{\sum_{k=1}^{n} \alpha_{k} T_{k,w}x\}_w $ is a continuous K-operator frame for $End_{\mathcal{A}}^{\ast}(\mathcal{H})$ withs bounds A and B and let for all $p \in \{1,2,..,n\}$, $\{T_{k,w}x\}_w$ be a  continuous K-operator frame for $End_{\mathcal{A}}^{\ast}(\mathcal{H})$ with bounds $A_p$ and $B_p$. Then, for every $x \in\mathcal{H}$, we have 
    $$A_p\|\langle K^{\ast}x, K^{\ast}x\rangle_{\mathcal{A}}\|\leq \|\{T_{p,w}x\}_{w\in \Omega}\|^2\leq B_p \|\langle x, x\rangle_{\mathcal{A}}\|. $$
    Then 
    $$\frac{1}{B_p}\|\{T_{k,w}x\}_{w\in \Omega}\|^2\leq \|\langle x, x\rangle_{\mathcal{A}}\|.$$
    Also, we have 
    $$A\|\langle K^{\ast}x, K^{\ast}x\rangle_{\mathcal{A}}\|\leq \|\{\sum_{k=1}^{n} \alpha_{k} T_{k,w}x\}_{w\in \Omega}\|^2\leq B \|\langle x, x\rangle_{\mathcal{A}}\|.$$
    Since K is a co-isometrie operator, then
    $$\|\langle x, x\rangle_{\mathcal{A}}\|=\|\langle K^{\ast}x, K^{\ast}x\rangle_{\mathcal{A}}\| \leq \frac{1}{A}  \|\{\sum_{k=1}^{n} \alpha_{k} T_{k,w}x\}_{w\in \Omega}\|^2,\,\,\ x \in \mathcal{H} .$$
    So
    $$\frac{A}{B_p  }\|\{T_{p,w}x\}_w\|^2 \leq \|\{\sum_{k=1}^{n} \alpha_{k} T_{k,w}x\}_w\|^2,\,\,\,  x \in \mathcal{H}.$$
    Therefore, for $\lambda = \frac{A}{B_p  } $, we have 
    $$\lambda \|\{T_{p,w}x\}_{w\in \Omega}\|\leq \|\sum_{k=1}^{n} \{\alpha_{k} T_{k,w}x\}_{w\in \Omega} \|, x \in \mathcal{H}.$$
	
\end{proof}
\begin{theorem}
	Let $K \in End_{\mathcal{A}}^{\ast}(\mathcal{H}) $. For $k=1,2,...,n$, let $\{T_{k,w}\}_{w\in \Omega} \subset End_{\mathcal{A}}^{\ast}(\mathcal{H}) $ be  a continuous K-operator frame for $End_{\mathcal{A}}^{\ast}(\mathcal{H})$ withs bounds $A_k$ and $B_k$, and let  $\{R_{k,w}\} \subset End_{\mathcal{A}}^{\ast}(\mathcal{H}) $ be any family. Let S : $l^2(\mathcal{H})$ $\longrightarrow$ $l^2(\mathcal{H})$ be a bounded linear operator such that $S(\sum_{k=1}^{n}  R_{k,w}x\}_{w\in \Omega})= \{T_{p,w}x\}_{w\in \Omega}$, for some $p \in \{1,2,...,n\}$. If there exists a constant $\lambda >0$ such that 
	
	$$ \|\int_{\Omega}\langle (T_{k,w}- R_{k,w}) x,(T_{k,w}-R_{k,w}) x \rangle_{\mathcal{A}} d\mu({\omega})\|$$
	 $$\leq \lambda  \|\int_{\Omega}\langle T_{k,w} x,T_{k,w} x \rangle_{\mathcal{A}} d\mu({\omega})\|,x \in \mathcal{H}, k=1,2,...,n. $$
	Then $\{\sum_{k=1}^{n}  R_{k,w}\}_{w\in \Omega}$ is a continuous K-operator frame for $End_{\mathcal{A}}^{\ast}(\mathcal{H})$ .
\end{theorem}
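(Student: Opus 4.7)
The plan is to mimic the proof strategy of the analogous continuous operator frame perturbation result (Theorem 3.4 in the excerpt), adapted to the $K$-operator frame setting where the lower bound involves $\langle K^{\ast}x, K^{\ast}x\rangle_{\mathcal{A}}$ rather than $\langle x,x\rangle_{\mathcal{A}}$. The perturbation hypothesis controls each $\{(T_{k,w}-R_{k,w})x\}_w$ pointwise by $\{T_{k,w}x\}_w$, and the operator $S$ provides the bridge to recover a lower frame bound for $\{\sum_{k=1}^{n} R_{k,w}\}_{w\in\Omega}$ from the $K$-frame property of $\{T_{p,w}\}_{w\in\Omega}$.

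For the upper bound, I would fix $x\in\mathcal{H}$, expand $\|\{\sum_{k=1}^{n} R_{k,w}x\}_{w\in\Omega}\|$ using the triangle inequality in the Hilbert $C^{\ast}$-module $l^{2}(\mathcal{H})$, and write
\begin{align*}
\left\|\left\{\sum_{k=1}^{n} R_{k,w}x\right\}_{w\in\Omega}\right\|
&\leq \sum_{k=1}^{n}\bigl(\|\{(T_{k,w}-R_{k,w})x\}_{w\in\Omega}\|+\|\{T_{k,w}x\}_{w\in\Omega}\|\bigr)\\
&\leq (1+\sqrt{\lambda})\sum_{k=1}^{n}\|\{T_{k,w}x\}_{w\in\Omega}\|\\
&\leq (1+\sqrt{\lambda})\Bigl(\sum_{k=1}^{n}\sqrt{B_{k}}\Bigr)\|x\|,
\end{align*}
where the second inequality invokes the hypothesis and the third uses that each $\{T_{k,w}\}_{w\in\Omega}$ is a $K$-operator frame with upper bound $B_{k}$. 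Squaring yields the desired upper bound on $\|\int_{\Omega}\langle\sum_{k}R_{k,w}x,\sum_{k}R_{k,w}x\rangle_{\mathcal{A}}\,d\mu(w)\|$.

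For the lower bound, I would use the bounded linear operator $S$ to relate $\{T_{p,w}x\}_{w\in\Omega}$ back to $\{\sum_{k=1}^{n}R_{k,w}x\}_{w\in\Omega}$: since $\{T_{p,w}\}_{w\in\Omega}$ is a continuous $K$-operator frame with lower bound $A_{p}$, we have $\sqrt{A_{p}}\|K^{\ast}x\|\leq\|\{T_{p,w}x\}_{w\in\Omega}\|$. Then the hypothesis $S(\{\sum_{k=1}^{n}R_{k,w}x\}_{w\in\Omega})=\{T_{p,w}x\}_{w\in\Omega}$ combined with boundedness of $S$ gives
\begin{equation*}
\sqrt{A_{p}}\|K^{\ast}x\|\leq \|\{T_{p,w}x\}_{w\in\Omega}\|=\|S(\{\textstyle\sum_{k=1}^{n}R_{k,w}x\}_{w\in\Omega})\|\leq\|S\|\,\|\{\textstyle\sum_{k=1}^{n}R_{k,w}x\}_{w\in\Omega}\|,
\end{equation*}
so that $\tfrac{A_{p}}{\|S\|^{2}}\|K^{\ast}x\|^{2}\leq\|\int_{\Omega}\langle\sum_{k}R_{k,w}x,\sum_{k}R_{k,w}x\rangle_{\mathcal{A}}\,d\mu(w)\|$. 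Combining the two bounds and appealing to the norm characterization of continuous $K$-operator frames (the theorem preceding Section 4) produces the required inequality $A\langle K^{\ast}x,K^{\ast}x\rangle_{\mathcal{A}}\leq\int_{\Omega}\langle\cdot,\cdot\rangle\leq B\langle x,x\rangle_{\mathcal{A}}$.

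The only real subtlety I foresee is bookkeeping with the $l^{2}(\mathcal{H})$ norm identifications $\|\{T_{k,w}x\}_{w}\|=\|\int_{\Omega}\langle T_{k,w}x,T_{k,w}x\rangle_{\mathcal{A}}\,d\mu(w)\|^{1/2}$ and ensuring that the perturbation hypothesis is correctly translated between its squared-norm form and the $l^{2}(\mathcal{H})$-norm form (taking square roots introduces the factor $\sqrt{\lambda}$). Once that is handled consistently, both inequalities follow by the same triangle-inequality manipulations already used in Theorems 3.4 and 5.1 of the excerpt, and no additional machinery beyond Lemma \ref{1}--\ref{33} is needed.
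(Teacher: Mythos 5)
Your proposal is correct and follows essentially the same route as the paper: the triangle inequality in $l^{2}(\mathcal{H})$ together with the perturbation hypothesis (with the $\sqrt{\lambda}$ factor from taking square roots) gives the upper bound $(1+\sqrt{\lambda})\bigl(\sum_{k=1}^{n}\sqrt{B_{k}}\bigr)\|\langle x,x\rangle_{\mathcal{A}}\|^{1/2}$, and the chain $\sqrt{A_{p}}\|K^{\ast}x\|\leq\|\{T_{p,w}x\}_{w\in\Omega}\|=\|S(\{\sum_{k}R_{k,w}x\}_{w\in\Omega})\|\leq\|S\|\,\|\{\sum_{k}R_{k,w}x\}_{w\in\Omega}\|$ gives the lower bound $A_{p}/\|S\|^{2}$, exactly as in the paper. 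Your closing remark about invoking the norm characterization of continuous $K$-operator frames to return to the $\mathcal{A}$-valued inequality is a reasonable (and slightly more careful) finishing step that the paper leaves implicit.
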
 
\begin{proof}
	For every $x \in \mathcal{H}$, we have 
\begin{align*}
    \|\{\sum_{k=1}^{n}  R_{k,w}x\}_{w\in \Omega}\|&\leq \sum_{k=1}^{n} \|\{ R_{k,w}x\}_{w\in \Omega}\|\\
    &\leq \sum_{k=1}^{n} (\|\{ (T_{k,w}-R_{k,w})x\}_{w\in \Omega}\|+\| \{T_{k,w}x\}_{w\in \Omega}\|)\\
    &\leq (\sqrt{\lambda}\| \{T_{k,w}\}x\|+ \| \{T_{k,w}x\}_{w\in \Omega}\|)\\
    &=(1+\sqrt{\lambda}) \sum_{k=1}^{n}\| \{T_{k,w}\}x\|\\
    &\leq (1+\sqrt{\lambda}) (\sum_{k=1}^{n}\sqrt{B_k}) \|\langle x,x \rangle_{\mathcal{A}}\|^{\frac{1}{2}}.
\end{align*}
    We have for any $x \in \mathcal{H}$
    $$\|S(\sum_{k=1}^{n}  R_{k,w}x\}_{w\in \Omega})\|=\| \{T_{p,w}x\}_{w\in \Omega}\|.$$
    Hence
\begin{align*}
    \sqrt{A_p}\|\langle K^{\ast}x, K^{\ast}x\rangle_{\mathcal{A}}\|^{\frac{1}{2}}&\leq \|\{T_{p,w}x\}_{w\in \Omega}\|\\
    &=\|S(\sum_{k=1}^{n}  R_{k,w}x\}_{w\in \Omega})\|\\
    &\leq \|S\|.\|\{\sum_{k=1}^{n}  R_{k,w}x\}_{w\in \Omega}\|, x \in \mathcal{H}
\end{align*} 
    So
    $$\frac{\sqrt{A_p}}{\|S\|}\|\langle K^{\ast}x, K^{\ast}x\rangle_{\mathcal{A}}\|^{\frac{1}{2}}\leq \|\{\sum_{k=1}^{n}  R_{k,w}x\}_{w\in \Omega}\|, x \in \mathcal{H}.$$
    Therefore 
    $$\frac{\sqrt{A_p}}{\|S\|}\|\langle K^{\ast}x, K^{\ast}x\rangle_{\mathcal{A}}\|^{\frac{1}{2}}\leq \|\{\sum_{k=1}^{n}  R_{k,w}x\}_{w\in \Omega}\|\leq (1+\sqrt{\lambda}) (\sum_{k=1}^{n}\sqrt{B_k}) \|\langle x,x \rangle_{\mathcal{A}}\|^{\frac{1}{2}},x \in \mathcal{H}. $$
    Thus 
    $$\frac{A_p}{\|S\|^2}\|\langle K^{\ast}x, K^{\ast}x\rangle_{\mathcal{A}}\|\leq \|\{\sum_{k=1}^{n}  R_{k,w}x\}_{w\in \Omega}\|^2 \leq (1+\sqrt{\lambda})^2 (\sum_{k=1}^{n}\sqrt{B_k})^2 \|\langle x,x \rangle_{\mathcal{A}}\|,x \in \mathcal{H}.  $$
    This gives that $\{\sum_{k=1}^{n}  R_{k,w}\}_{w\in \Omega}$ is a continuous K-operator frame for $End_{\mathcal{A}}^{\ast}(\mathcal{H})$ .

\end{proof}
{\bf Conflict of interest:}
On behalf of all authors, the corresponding author states that there is no conflict of interest.

{\bf Data availability statement:}  No data were used to support this study.

{\bf Funding statement:}  This study was not funded.
\bibliographystyle{amsplain}

\end{document}